\nonstopmode \numberwithin{equation}{section}
\newtheorem{theorem}{Theorem}[section]
\newtheorem{remark}{Remark}[section]
\newtheorem{problem}{Problem}
\newtheorem{lemma}{Lemma}[section]
\newtheorem{corollary}{Corollary}[section]
\begin{document}
\date{}
\title{Vietoris type theorem related to positivity of trigonometric polynomials}
\author{
Priyanka Sangal
}
\address{Department of  Mathematics  \\
Indian Institute of Technology, Roorkee-247 667,
Uttarkhand,  India
}
\email{sangal.priyanka@gmail.com}

\author{
A. Swaminathan
}
\address{
Department of  Mathematics  \\
Indian Institute of Technology, Roorkee-247 667,
Uttarkhand,  India
}
\email{a.swaminathan@ma.iitr.ac.in, mathswami@gmail.com}

\begin{abstract}
In this work, a Vietoris type theorem for the positivity of sine and cosine sum for a particular
sequence of real numbers is provided. In this connection, the positivity of a particular type
of sine sum involving ratio of some parameters is given, which is new in the literature.
Various new results that follow from the Vietoris type theorem include improved estimates for the
location of the zeros of a class of trigonometric polynomials and new positive sums
for orthogonal polynomials. An open problem is also provided for
the partial sums of the generalized polylogarithm.
\end{abstract}

\maketitle

2010 Mathematics Subject Classification: {Primary 42A05; 42A32; Secondary 33C45; 26D05}

\keywords{Keywords: Positive trigonometric sums; Abel summation formula;
Location of zeros;
Komatu integral operator;
Chebyshev polynomials;
Orthogonal polynomials on the unit circle.}

\pagestyle{myheadings}\markboth{Priyanka Sangal and A.Swaminathan}
{Vietoris type theorem related to positivity of trigonometric polynomials}

\section{Preliminaries}
The partial sum of the Fourier series given by
\begin{align}\label{eqn:Fourier-partial-sum}
T_n(x)= \dfrac{a_0}{2} +\sum_{k=1}^{n} (a_k \cos kx+b_k \sin kx)
\end{align}
has several applications in Fourier analysis. In particular, for $b_k=0$ and $a_k=0$
in \eqref{eqn:Fourier-partial-sum}, respectively, we get the trigonometric cosine sum and the trigonometric
sine sum. Besides Fourier analysis, the positivity of these cosine sum and sine sum has interesting
applications and vast literature. Earlier results corresponding to related sine series appeared in 1755 and
for the cosine series appeared in 1848. For information in this direction, we refer to
\cite[Chapter 4, p.300]{milovanovi-mitrinovi-rassias-1994-book-polynomials} and references therein.
In this  manuscript we are interested in finding the positivity of cosine sum and sine sum for
particular case of the coefficients $a_n$ and $b_n$
that has $a_k=b_k=r_k$ where
\begin{align}\label{eqn:r_k-coefficients}
r_0=2,\quad  r_1=1 \quad r_k:=\frac{(k+\alpha)^{\lambda}}{(k+\beta)^{\mu}}, \qquad
k\geq 2, \quad \mbox{ where} \quad \alpha,\beta\geq 0.
\end{align}
Note that this way of considering same values for $a_k$ and $b_k$, $k\geq 2$ was first given by
Vietoris \cite{vietoris-1958} in which
$a_k=b_k=\gamma_k$, where $\gamma_k$ is defined as:
\begin{align*}
\gamma_0=\gamma_1=1\quad  \hbox{and} \quad \gamma_{2k}=\gamma_{2k+1}
    =\frac{1\cdot3\cdot5\cdots(2k-1)}{2\cdot4\cdot6\cdots2k}
=\dfrac{(1/2)_k}{k!}, \quad k=1,2,\ldots.
\end{align*}
Before Vietoris, the positivity of cosine and sine sum were considered separately.
Vietoris also gave sufficient conditions on the coefficients of a general class of sine and cosine sums
that ensure their simultaneous positivity in $(0,\pi)$ which is given by the following.
\begin{theorem}\rm{\cite{vietoris-1958}}\label{thm:vietoris-1958}
Suppose that $a_0\geq a_1 \geq a_2 \cdots \geq a_n >0$ and $2k a_{2k}\leq (2k-1)a_{2k-1}$, $k\geq1$,
then for all positive integers $n$ and $\theta\in(0,\pi)$, we have
\begin{align}\label{eqn:vietoris-sequence}
\sum_{k=1}^n a_k\sin{k\theta}>0 \qquad \hbox{  and  }  \qquad \sum_{k=0}^n a_k\cos{k\theta}>0.
\end{align}
\end{theorem}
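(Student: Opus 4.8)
The plan is first to reduce the assertion to the single ``extremal'' sequence $\{\gamma_k\}$, and then to prove the two inequalities for that sequence by exploiting the convexity concealed in $\gamma_{2j}=\binom{2j}{j}4^{-j}$.

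\emph{Step 1 (reduction to the extremal coefficients).} Using $\gamma_{2j+1}=\gamma_{2j}$ together with $\gamma_{2j}/\gamma_{2j-1}=(2j-1)/(2j)$, one checks directly that the two hypotheses $a_0\ge a_1\ge\cdots\ge a_n>0$ and $2ja_{2j}\le(2j-1)a_{2j-1}$ are, taken together, \emph{equivalent} to the single requirement that $\{a_k/\gamma_k\}_{k=0}^{n}$ be non-increasing: indeed $2ja_{2j}\le(2j-1)a_{2j-1}$ says $a_{2j-1}/\gamma_{2j-1}\ge a_{2j}/\gamma_{2j}$, the inequality $a_{2j}\ge a_{2j+1}$ says $a_{2j}/\gamma_{2j}\ge a_{2j+1}/\gamma_{2j+1}$, and $a_{2j-1}\ge a_{2j}$ is then automatic. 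Putting $\lambda_m:=a_m/\gamma_m-a_{m+1}/\gamma_{m+1}\ge 0$ with the convention $a_{n+1}=0$ (so $\lambda_n=a_n/\gamma_n>0$ and $a_k=\gamma_k\sum_{m=k}^{n}\lambda_m$) and interchanging the order of summation gives
\[
\sum_{k=1}^{n}a_k\sin k\theta=\sum_{m=1}^{n}\lambda_m\Big(\sum_{k=1}^{m}\gamma_k\sin k\theta\Big),\qquad
\sum_{k=0}^{n}a_k\cos k\theta=\sum_{m=0}^{n}\lambda_m\Big(\sum_{k=0}^{m}\gamma_k\cos k\theta\Big).
\]
As the $\lambda_m$ are non-negative and $\lambda_n>0$, it suffices to prove, for every $m\ge 1$ and every $\theta\in(0,\pi)$, the \emph{extremal inequalities} $s_m(\theta):=\sum_{k=1}^{m}\gamma_k\sin k\theta>0$ and $c_m(\theta):=\sum_{k=0}^{m}\gamma_k\cos k\theta>0$.

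\emph{Step 2 (extremal inequalities).} Grouping the equal consecutive coefficients $\gamma_{2j}=\gamma_{2j+1}$, for an odd cut-off $m=2N+1$ one has
\[
c_{2N+1}(\theta)=2\cos\tfrac{\theta}{2}\sum_{j=0}^{N}\gamma_{2j}\cos\big((2j+\tfrac12)\theta\big),\qquad
s_{2N+1}(\theta)=2\cos\tfrac{\theta}{2}\sum_{j=0}^{N}\gamma_{2j}\sin\big((2j+\tfrac12)\theta\big),
\]
and $2\cos(\theta/2)>0$ on $(0,\pi)$. The structural point is that $g_j:=\gamma_{2j}=\binom{2j}{j}4^{-j}$ is positive, non-increasing and \emph{convex} in $j$, since $g_j-g_{j+1}=g_j/(2j+2)$ is a product of two non-increasing positive factors, so $\Delta^2 g_j\ge 0$. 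Two applications of Abel's summation formula then express $\sum_{j=0}^{N}g_j\cos((2j+\tfrac12)\theta)$ (and likewise with $\sin$) as
\[
g_N\,E^{(1)}_N(\theta)+(g_{N-1}-g_N)\,E^{(2)}_{N-1}(\theta)+\sum_{l=0}^{N-2}(\Delta^2 g_l)\,E^{(2)}_l(\theta),
\]
where $E^{(1)}_l(\theta)=\sum_{j=0}^{l}\cos((2j+\tfrac12)\theta)$ and $E^{(2)}_l(\theta)=\sum_{j=0}^{l}(l+1-j)\cos((2j+\tfrac12)\theta)$ are the Fej\'er-type iterates of the kernel. The last two groups of terms are then manifestly non-negative once the iterated kernels are shown positive; only the boundary term $g_N E^{(1)}_N$ (whose kernel is not sign-definite) needs an extra estimate.

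\emph{Positivity of the iterated kernels.} Writing $\sum_{j=0}^{l}(l+1-j)e^{2ij\theta}=A_l(\theta)+iB_l(\theta)$, Fej\'er's identity gives $A_l(\theta)=\tfrac{l+1}{2}+\tfrac12\big(\sin(l+1)\theta/\sin\theta\big)^{2}\ge\tfrac{l+1}{2}$, while $B_l(\theta)=\big((l+1)\sin 2\theta-\sin(2l+2)\theta\big)/(4\sin^{2}\theta)$ has the sign of $\sin 2\theta$ on $(0,\pi)$ by the elementary bound $|\sin n\psi|\le n|\sin\psi|$. Taking real and imaginary parts of $e^{i\theta/2}(A_l+iB_l)$, the cosine iterate is $A_l\cos(\theta/2)-B_l\sin(\theta/2)$ and the sine iterate is $A_l\sin(\theta/2)+B_l\cos(\theta/2)$; on $(\pi/2,\pi)$ the sign of $B_l$ makes one of the two terms automatically non-negative, and on $(0,\pi/2]$ the bounds $A_l\ge\tfrac{l+1}{2}$, $|B_l|\le(l+1)|\cos\theta|/\sin\theta$ reduce positivity to $\cos^{2}(\theta/2)\le 1$, while on $(\pi/2,\pi)$ one uses $2\sin^{2}(\theta/2)\ge|\cos\theta|$. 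Hence both iterated kernels are strictly positive on $(0,\pi)$, which settles the odd cut-offs. The even cut-offs $m=2N$ are handled by the companion pairing $\gamma_{2j-1}\cos(2j-1)\theta+\gamma_{2j}\cos 2j\theta$; with $\gamma_{2j-1}-\gamma_{2j}=\gamma_{2j-1}/(2j)=\gamma_{2j-2}-\gamma_{2j}$ this again produces the factor $2\cos(\theta/2)$ plus a single leftover term $\gamma_{2N}\cos 2N\theta$ (resp.\ $\gamma_{2N}\sin 2N\theta$) to be dominated using the same estimates.

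\emph{Main obstacle.} Step~1 is routine; the whole difficulty is concentrated in Step~2, and there in the \emph{cosine} sum. The reason is that the first-order kernel $E^{(1)}_l$ (and the first partial sums of $\sum\sin((2j+\tfrac12)\theta)$) are \emph{not} sign-definite on $(0,\pi)$, so a single summation by parts is useless; one is forced to use the convexity of $g_j$ and pass to the second iterates. Even then the conjugate kernel $B_l$ changes sign at $\theta=\pi/2$, so the positivity of $E^{(2)}_l$ and of its companion must be argued separately on $(0,\pi/2]$ and on $(\pi/2,\pi)$, and both the boundary term $g_N E^{(1)}_N$ and the leftover term in the even case require a finer estimate near $\theta=0$ and $\theta=\pi$ (where, however, the sums are positive essentially by inspection, so these regions can be split off). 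This is precisely the point at which Vietoris' theorem is genuinely stronger than the classical results for the cosine and sine sums treated in isolation; everything else is bookkeeping.
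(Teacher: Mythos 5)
The paper itself offers no proof of this theorem: it is quoted from Vietoris with a pointer to Askey and Steinig for a simpler proof, so there is no internal argument to compare against. Judged on its own terms, your Step 1 is correct and is exactly the standard reduction: the two hypotheses are equivalent to $\{a_k/\gamma_k\}$ being non-increasing, and summation by parts reduces everything to the extremal sums $s_m$ and $c_m$. The pairing $\gamma_{2j}=\gamma_{2j+1}$, the convexity of $g_j=\gamma_{2j}$, the double Abel summation, and the positivity of the second iterated kernels $E^{(2)}_l$ (via $A_l\ge\tfrac{l+1}{2}$, the sign of $B_l$, and the bound $|B_l|\le(l+1)|\cos\theta|/\sin\theta$) all check out.

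The gap is that you stop exactly where the theorem becomes hard. After the double summation by parts the convexity terms are non-negative, but the boundary term $g_N E^{(1)}_N(\theta)$ remains, with $E^{(1)}_N(\theta)=\bigl(\sin(\theta/2)+\sin((2N+\tfrac32)\theta)\bigr)/(2\sin\theta)$. Near $\theta\approx 3\pi/(4N+3)$ this kernel is negative of order $N$, so $g_N E^{(1)}_N\asymp-\gamma_{2N}N\asymp-\sqrt{N}$, while the accumulated positive contribution $\sum_l(\Delta^2 g_l)E^{(2)}_l$ is also only of order $\sqrt{N}$ there (since $\Delta^2 g_l\asymp l^{-5/2}$ and $E^{(2)}_l\lesssim l^2$). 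The two competing quantities are of the same order, so the conclusion hinges on a genuine quantitative comparison of their constants --- this is precisely where the numerical content of Vietoris' theorem, and the sharpness of the ratio $(2k-1)/(2k)$, lives. Your proposal defers this to ``an extra estimate'' and asserts that near $\theta=0$ the sums are ``positive essentially by inspection,'' but the by-inspection region is only $\theta<\pi/(2m+1)$ (all cosines positive), which does not reach the dangerous range $(2N+\tfrac32)\theta\approx\tfrac{3\pi}{2}$. The same unresolved issue recurs for the even cut-offs, where the leftover term $\gamma_{2N}\cos 2N\theta$ is again of size $1/\sqrt{N}$ against partial sums of the same order. Until the boundary-term domination is actually proved (Askey and Steinig do this by a different, induction-based argument on $c_{2m+1}-c_{2m-1}$ with explicit estimates for $\sqrt{m}\,\gamma_{2m}$), the proposal is a correct framework but not a proof.
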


The importance of Vietoris' inequalities is known after the work of Askey and Steinig
\cite{askey-steinig-1974-trig-sum-ams} where a simpler proof is given.
Furthermore, these results have applications including improved estimates for the
location of zeros of a class of trigonometric polynomials, new positive sums of orthogonal polynomials,
positive quadrature methods, derivation of various summation formula including hypergeometric summation formula.
For the applications of these inequalities, see \cite{askey-steinig-1974-trig-sum-ams, askey-1996-viet-ineq,
brown-hewitt-1984-pos-trig-sum, gasper-1969-nonneg-sum-JMAA}.
For other applications of positive trigonometric sums we refer to
\cite{dimitrov-merlo-2002-cons-approx, signal-processing-book, Fernandez-2004}
and references therein. It is important to note that as a recent improvement, the positive
trigonometric sine inequality was improved in \cite{kwong-2015-JAT}.
For the algebraic point of view of bounds of nonnegative trigonometric polynomials,
we refer to \cite{tkachev-arxiv-2003}. In 1995 Belov \cite{belov-1995-sine-sum} obtained
the necessary and sufficient condition for the positivity of trigonometric sine sum
in the interval $(0,\pi)$ which is also the sufficient condition for the cosine sum.

\begin{lemma}\rm{\cite{belov-1995-sine-sum}}\label{lemma:belov-1995-sine-sum}
Let $\{a_k\}_{k=0}^{\infty}$ be any decreasing sequence of positive real numbers. Then the condition
\begin{align*}
\sum_{k=1}^n (-1)^{k-1}ka_k\geq 0 ,\quad  \hbox{$\forall n\geq 2$, $a_1>0$},
\end{align*}
is necessary and sufficient for the validity of the inequality
\begin{align*}
\sum_{k=1}^na_k\sin{k\theta}>0, \quad \hbox{$\forall n\in\mathbb{N}$, $0<\theta<\pi$},
\end{align*}
and the same condition implies that,
\begin{align*}
\sum_{k=0}^n a_k\cos{k\theta}>0, \quad \mbox{$\forall n\in\mathbb{N}$, $0<\theta< \pi$}.
\end{align*}
\end{lemma}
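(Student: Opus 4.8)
The plan is to prove the sine inequality (both directions) first, and then deduce the cosine inequality from it. \emph{Necessity} is short: taking $n=1$ and any $\theta\in(0,\pi)$ forces $a_1>0$; for $n\ge 2$, write $S_n(\theta)=\sum_{k=1}^n a_k\sin k\theta$ and let $\theta\uparrow\pi$. With $\theta=\pi-\psi$ one has $\sin k(\pi-\psi)=(-1)^{k-1}\sin k\psi$, so $S_n(\pi-\psi)=\sum_{k=1}^n(-1)^{k-1}a_k\sin k\psi$ and therefore $\psi^{-1}S_n(\pi-\psi)\to\sum_{k=1}^n(-1)^{k-1}k a_k$ as $\psi\downarrow 0$. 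Since $S_n>0$ on $(0,\pi)$, the left-hand side stays positive for small $\psi$, whence $\sum_{k=1}^n(-1)^{k-1}ka_k\ge 0$. (Equivalently: $S_n(\pi)=0$, $S_n$ is smooth, and one-sided positivity forces $S_n'(\pi)=-\sum_{k=1}^n(-1)^{k-1}ka_k\le 0$.)

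For the \emph{sufficiency} of the sine inequality I would keep the substitution $\theta=\pi-\psi$. Put $\sigma_m:=\sum_{k=1}^m(-1)^{k-1}ka_k$, so $\sigma_0=0$, $\sigma_1=a_1>0$, and $\sigma_m\ge 0$ for all $m$ by hypothesis. Using $(-1)^{k-1}a_k=(\sigma_k-\sigma_{k-1})/k$ and Abel summation,
\[
S_n(\pi-\psi)=\sum_{k=1}^n\frac{\sigma_k-\sigma_{k-1}}{k}\sin k\psi
=\sigma_n\,\frac{\sin n\psi}{n}+\sum_{k=1}^{n-1}\sigma_k\!\left(\frac{\sin k\psi}{k}-\frac{\sin(k+1)\psi}{k+1}\right).
\]
For $\psi\in(0,\pi/n]$ this is a sum of nonnegative terms: $\sin n\psi\ge 0$, and since $t\mapsto(\sin t)/t$ is decreasing on $(0,\pi)$, for $(k+1)\psi\le\pi$ one gets $\frac{\sin k\psi}{k}=\psi\cdot\frac{\sin k\psi}{k\psi}\ge\psi\cdot\frac{\sin(k+1)\psi}{(k+1)\psi}=\frac{\sin(k+1)\psi}{k+1}$; as the coefficient of $\sigma_1$ equals $\sin\psi\,(1-\cos\psi)>0$, the sum is $>0$. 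This settles $\theta\in[\pi-\pi/n,\pi)$. At the opposite end, for $\theta\in(0,\tfrac{2\pi}{n+1})$, I would use Abel summation on the decreasing positive coefficients instead: $S_n(\theta)=a_n\widetilde{D}_n(\theta)+\sum_{k=1}^{n-1}(a_k-a_{k+1})\widetilde{D}_k(\theta)$ with $\widetilde{D}_k(\theta)=\sum_{j=1}^k\sin j\theta=\frac{\sin\frac{k\theta}{2}\sin\frac{(k+1)\theta}{2}}{\sin\frac{\theta}{2}}>0$ throughout that range, giving $S_n(\theta)>0$ there as well.

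The main obstacle is the middle range $\theta\in[\tfrac{2\pi}{n+1},\,\pi-\tfrac{\pi}{n}]$, nonempty once $n\ge 3$, where neither elementary representation is termwise nonnegative and one must use the whole family $\sigma_1,\dots,\sigma_n\ge 0$ simultaneously with the monotonicity $a_1\ge\dots\ge a_n$. My approach here would be to produce a representation $S_n(\theta)=\sum_{m=1}^n\sigma_m\,K_{m,n}(\theta)$ with all kernels $K_{m,n}\ge 0$ on $(0,\pi)$, obtained by a further summation by parts that absorbs the boundary term $\sigma_n\sin n\psi/n$ against the decreasing character of $\{a_k\}$; alternatively, to run an induction on $n$ (the hypotheses restrict to every truncation, so $S_{n-1}>0$, and $S_n=S_{n-1}+a_n\sin n\theta$ needs attention only where $\sin n\theta<0$, which requires a quantitative lower bound for $S_{n-1}$ there). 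Verifying this kernel nonnegativity (or the inductive lower bound) is the technical heart of the proof — it is exactly where Vietoris' pointwise hypothesis $2ka_{2k}\le(2k-1)a_{2k-1}$ is replaced by the weaker global condition $\sigma_m\ge 0$, which it implies by pairing consecutive terms — and is the step I expect to cost the most; it is the content of \cite{belov-1995-sine-sum}.

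Finally, the \emph{cosine} inequality is deduced from the sine one. From $2\sin\tfrac{\theta}{2}\cos k\theta=\sin(k+\tfrac12)\theta-\sin(k-\tfrac12)\theta$ and Abel summation,
\[
2\sin\tfrac{\theta}{2}\sum_{k=0}^n a_k\cos k\theta
= a_0\sin\tfrac{\theta}{2}+\sum_{k=0}^{n-1}(a_k-a_{k+1})\sin\!\big((k+\tfrac12)\theta\big)+a_n\sin\!\big((n+\tfrac12)\theta\big);
\]
after the substitution $\theta=\pi-\psi$ the right-hand side becomes a cosine polynomial in $\psi$ whose coefficients are the nonnegative differences $a_k-a_{k+1}$ and are again controlled by the quantities $\sigma_m$, so positivity on $(0,\pi)$ follows from the estimates already developed; alternatively one passes directly from the positivity of \emph{all} partial sums of the sine series to that of the cosine sum by a summation-by-parts comparison.
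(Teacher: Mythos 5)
This lemma is quoted from Belov's 1995 article and the paper gives no proof of it, so there is no in-house argument to compare against; your proposal has to stand on its own. The parts you actually carry out are correct: necessity follows exactly as you say from $\sin k(\pi-\psi)=(-1)^{k-1}\sin k\psi$ and $\psi^{-1}S_n(\pi-\psi)\to\sum_{k=1}^n(-1)^{k-1}ka_k$; the Abel summation in the $\sigma_k$ handles $\theta\in[\pi-\pi/n,\pi)$ since $t\mapsto(\sin t)/t$ decreases on $(0,\pi)$; and the Abel summation in the $a_k$ with the conjugate Dirichlet kernel handles $\theta\in(0,2\pi/(n+1))$.

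The genuine gap is that the sufficiency direction is never actually established: for $n\ge 3$ the middle range $[2\pi/(n+1),\pi-\pi/n]$ is nonempty, and you explicitly defer it to the cited reference. Moreover, the first strategy you propose for closing it cannot work as stated. Because the map $(a_1,\dots,a_n)\mapsto(\sigma_1,\dots,\sigma_n)$ is an invertible linear change of variables and the constraint set of decreasing positive sequences has nonempty interior, the kernels in any identity $S_n(\theta)=\sum_{m=1}^n\sigma_m K_{m,n}(\theta)$ are uniquely determined, and your own Abel summation shows $K_{n,n}(\theta)=(-1)^{n+1}\sin(n\theta)/n$, which changes sign on $(0,\pi)$ for every $n\ge2$; so a termwise-nonnegative kernel representation in the $\sigma_m$ alone does not exist, and the monotonicity of $\{a_k\}$ must enter in a way you do not specify. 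The inductive alternative is likewise only named, with no candidate for the required quantitative lower bound on $S_{n-1}$ where $\sin n\theta<0$. The cosine deduction is also incomplete: after the substitution $\theta=\pi-\psi$ the coefficients $(-1)^k(a_k-a_{k+1})$ in your identity alternate in sign, so positivity does not follow from the estimates already developed without a further argument. In short, the necessity half and the two boundary ranges are sound, but the step that constitutes the actual content of Belov's theorem is missing.
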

Belov's result is the best possible for the positivity on $(0,\pi)$
of sine sums with nonnegative and decreasing sequence of coefficients.
This result is stronger than Theorem \ref{thm:vietoris-1958}.
For a complete account of details of the literature in this direction,
we refer to \cite{alzer-kwong, koumandos-2007-ext-viet-ramanujan,
saiful-swami-2011-CAMWA} and the references therein.
We will use Abel's summation formula as a tool in proving
many of our results including the main result of Section \ref{sec:positivity-2},
which is Theorem \ref{thm:new-positive-sine-sums}.
The statement of Abel summation formula is as follows:

\begin{lemma}[Abel's Summation formula]\label{lemma:abel-sum}
If $\{b_k\}_{k=0}^{\infty}$ and $\{c_k\}_{k=0}^{\infty}$ be two sequences of real numbers, then
\begin{align*}
\sum_{k=0}^n b_kc_k=\sum_{k=0}^{n-1}\left( \Delta b_k \sum_{j=0}^k c_j\right) + b_n\sum_{k=0}^n c_k,
\end{align*}
where $\Delta b_k=b_k-b_{k+1}$.
\end{lemma}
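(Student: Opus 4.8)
The plan is to prove this by the standard ``summation by parts'' manipulation, introducing the partial sums of $\{c_k\}$ as auxiliary quantities. Define $C_k=\sum_{j=0}^k c_j$ for $k\geq 0$, and set $C_{-1}=0$, so that $c_k=C_k-C_{k-1}$ for every $k\geq 0$. Substituting this into the left-hand side gives
\[
\sum_{k=0}^n b_k c_k=\sum_{k=0}^n b_k(C_k-C_{k-1})=\sum_{k=0}^n b_k C_k-\sum_{k=0}^n b_k C_{k-1}.
\]

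Next I would reindex the second sum by replacing $k$ with $k+1$; since $C_{-1}=0$, the term with the new index $k=-1$ drops out, and one is left with $\sum_{k=0}^{n-1} b_{k+1}C_k$. Combining the two sums and peeling off the top term $b_nC_n$ from the first sum yields
\[
\sum_{k=0}^n b_k c_k=b_n C_n+\sum_{k=0}^{n-1}(b_k-b_{k+1})C_k=b_n\sum_{k=0}^n c_k+\sum_{k=0}^{n-1}\Bigl(\Delta b_k\sum_{j=0}^k c_j\Bigr),
\]
which is exactly the asserted identity after recalling $\Delta b_k=b_k-b_{k+1}$.

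There is essentially no analytic obstacle here: the only thing to be careful about is the index bookkeeping in the reindexing step and the convention $C_{-1}=0$, which is what makes the lower endpoint come out cleanly. As an alternative, one could equally well run an induction on $n$: the base case $n=0$ reads $b_0c_0=b_0c_0$, and the inductive step amounts to verifying that adding $b_{n+1}c_{n+1}$ to both sides of the formula for $n$ reproduces the formula for $n+1$, which again reduces to a short telescoping computation. I would present the first (non-inductive) argument as the main proof since it is the most transparent.
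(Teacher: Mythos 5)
Your argument is correct: the substitution $c_k=C_k-C_{k-1}$ with $C_{-1}=0$, the reindexing, and the telescoping all check out, and the identity follows exactly as you write it. The paper states this lemma as a standard tool without giving any proof, so there is nothing to compare against; your summation-by-parts derivation is the canonical one and is entirely adequate.
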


The rest of the manuscript is organized as follows.
In Section \ref{sec:positivity-3}, a generalization of Vietoris' inequalities is obtained.
In Section \ref{sec:positivity-2}, for a new sequence $\{r_k\}$ the positivity of trigonometric
sine and cosine sums are obtained which extend inequalities given in
\cite{acharya-thesis-1997} and \cite{saiful-swami-2011-CAMWA}.
The new sequence is related to generalized polylogarithm.
The application of new inequalities obtained in \ref{sec:positivity-3}
in finding the location of zeros of trigonometric polynomials
is discussed in Section \ref{sec:positivity-4}.
Further new inequalities involving orthogonal polynomials
on the unit circle are also provided in Section \ref{sec:positivity-4}.

\section{Extension of Vietoris' Inequalities}\label{sec:positivity-3}
In this section, our main objective is to find certain extension of Theorem
\ref{thm:vietoris-1958} by means of generalization of the coefficients.
This generalization leads finding the location of
zeros of certain trigonometric polynomials and new positive sums of certain orthogonal
polynomials to show the importance of these generalizations.

Among several extensions of Theorem \ref{thm:vietoris-1958}
available in the literature, the following
extension given in \cite{brown-dai-wang-2007-ext-viet-ramanujan}
is important for our discussion.
\begin{theorem}
\rm{\cite{brown-dai-wang-2007-ext-viet-ramanujan}}
\label{thm:koumandos-2007-ext-viet-ramanujan}
Let $b_k$ be defined as
\begin{align}\label{eqn:define-koum-coeff}
b_{2k}=b_{2k+1}=\frac{(1-\alpha)_k}{k!},\quad k\geq0
\end{align}
for $\alpha\in(0,1)$. Then, for any positive integer $n$ and $0<\theta<\pi$, the following
positivity condition holds.
\begin{align}\label{eqn:koum-cosine-sum}
\sum_{k=0}^n b_k\cos{k\theta}>0\quad \hbox{ for } \alpha_0 \leq\alpha<1,
\end{align}
where $\alpha_0\in (0.308443,0.308444)$
is the unique root in $(0,1)$ of the equation
\begin{align}\label{eqn:izumi_number}
\int_0^{3\pi/2}\frac{\cos{t}}{t^{\alpha}}dt=0.
\end{align}
Numerically $\alpha_0=0.3084437\ldots$. The sums in \eqref{eqn:koum-cosine-sum}
are unbounded below when $\alpha<\alpha_0$. Further, for the sine sums for
$\alpha_0 \leq \alpha<1$ the following hold.
\begin{enumerate}
\item $\displaystyle\sum_{k=1}^{2n} b_k \sin{k\theta}>0$ for $\theta\in\left(0,\pi-\dfrac{\pi}{n}\right)$,
\item $\displaystyle\sum_{k=1}^{2n+1} b_k \sin{k\theta}>0$ for $\theta\in(0,\pi)$.
\end{enumerate}
\end{theorem}

\begin{remark}
Using summation by parts, we can conclude that Theorem \ref{thm:koumandos-2007-ext-viet-ramanujan}
also holds good for the sequence $\{a_k\}$ satisfying
\begin{align*}
\frac{a_{2k}}{a_{2k-1}} \leq \left(1-\frac{\alpha}{k}\right).
\end{align*}
and $\alpha_0\leq \alpha\leq 1$.
Note that Theorem \ref{thm:koumandos-2007-ext-viet-ramanujan}
does not hold for any $\alpha$ with $\alpha<\alpha_0$.
\end{remark}

This remark leads to the following.
\begin{problem}\label{prob:vietoris-extension}
To find the sequence $\{a_k\}$ so that the positivity of cosine sums also holds for $\alpha$ with
$\alpha<\alpha_0$.
\end{problem}

To address Problem \ref{prob:vietoris-extension} we define
a new sequence $\{a_k\}$ satisfying for $b\geq c>0$,
\begin{align*}
\frac{a_{2k+1}}{a_{2k}} &\leq 1,\quad k=0,1,2\ldots n,\\
\frac{a_{2k}}{a_{2k-1}} &\leq \left(1-\frac{b-c}{b+n-k}\right)\left(1-\frac{\alpha}{k}\right),\quad k=1,2\ldots n.
\end{align*}
Thus, in accordance with \eqref{eqn:vietoris-sequence}, we define the sequence $\{c_k\}$ by
$a_k=c_k$ and $c_0=1$ such that
\begin{align}\label{dfn:ext-viet-coeff}
c_{2k}=c_{2k+1}=\frac{B_{n-k}}{B_n}\frac{(1-\alpha)_k}{k!}, \quad k=0,1,2,\ldots,n,
\end{align}
where
\begin{align}\label{eqn:define-Bk}
B_0=1\quad  \hbox{and}\quad  B_k=\frac{(b)_k}{(c)_k}\frac{1+b-c}{b},\qquad b\geq c>0 \qquad k\geq 1.
\end{align}
Now we are ready to provide our result for the positivity of cosine sums involving the coefficients $\{c_k\}$.

\begin{theorem}\label{thm:new-cosine-sum}
Let $\{c_k\}$ be defined as in \eqref{dfn:ext-viet-coeff}. Then for all positive integers $n$ and $0<\theta<\pi$,
we have
 \begin{align*}
 \sum_{k=0}^n c_k \cos{k\theta}>0
 \end{align*}
 when $\alpha\geq \alpha_0'$, where $\alpha_0'$ is the unique root in $(0,1)$ of the equation
 \begin{align}\label{eqn:integral-cosine-sum}
\int_0^{3\pi/2} t^{-\alpha} \cos{t}\left(1-\frac{2t}{3\pi}\right)^{b-c} dt=0.
\end{align}
This $\alpha_0'$ is related to the
Littlewood-Salem-Izumi constant $\alpha_0$ defined in \eqref{eqn:izumi_number}.
\end{theorem}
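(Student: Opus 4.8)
The plan is to adapt the Fej\'er--kernel / Riemann--sum method used by Askey--Steinig \cite{askey-steinig-1974-trig-sum-ams} and Koumandos \cite{koumandos-2007-ext-viet-ramanujan} in the proof of Theorem \ref{thm:koumandos-2007-ext-viet-ramanujan}, carrying the extra damping factor $B_{n-k}/B_n$ through every step. \textbf{Step 1 (reduction).} Since $c_{2k}=c_{2k+1}$, one groups the terms in consecutive pairs and uses $\cos 2k\theta+\cos(2k+1)\theta=2\cos\frac{\theta}{2}\,\cos\!\bigl((2k+\tfrac12)\theta\bigr)$; as $2\cos\frac{\theta}{2}>0$ on $(0,\pi)$, the assertion becomes equivalent to
\[
\Lambda_n(\theta):=\sum_{k=0}^{n}\lambda_k\cos\!\Bigl(\bigl(2k+\tfrac12\bigr)\theta\Bigr)>0,
\qquad \lambda_k:=\frac{B_{n-k}}{B_n}\,\frac{(1-\alpha)_k}{k!},
\]
the passage between sums of even and odd length being handled as in \cite{askey-steinig-1974-trig-sum-ams}. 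From $\lambda_k/\lambda_{k-1}=\bigl(1-\tfrac{b-c}{b+n-k}\bigr)\bigl(1-\tfrac{\alpha}{k}\bigr)$ (recall $B_{k+1}/B_k=(b+k)/(c+k)\ge 1$ because $b\ge c$, and $(1-\alpha)_k/k!$ decreases because $0<\alpha<1$) we see that $\{\lambda_k\}$ is positive, strictly decreasing, and obeys $\lambda_k/\lambda_{k-1}\le 1-\tfrac{\alpha}{k}$ — precisely the monotonicity hypothesis driving Koumandos' argument, here reinforced by the factor $1-\tfrac{b-c}{b+n-k}\le 1$.

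\textbf{Step 2 (non-critical ranges).} For $\theta$ close enough to $\pi$ put $\theta=\pi-t$; then $\cos\!\bigl((2k+\tfrac12)(\pi-t)\bigr)=\sin\!\bigl((2k+\tfrac12)t\bigr)$, so for $t$ small enough that every such sine is positive, $\Lambda_n(\theta)=\sum_k\lambda_k\sin\!\bigl((2k+\tfrac12)t\bigr)>0$ at once; likewise for $\theta$ sufficiently close to $0$ every cosine in $\Lambda_n$ is positive. On the remaining bulk range $\delta_1/n\le\theta\le\pi-\delta_2/n$ apply Abel's formula (Lemma \ref{lemma:abel-sum}),
\[
\Lambda_n(\theta)=\sum_{k=0}^{n-1}(\lambda_k-\lambda_{k+1})D_k(\theta)+\lambda_nD_n(\theta),
\qquad
D_k(\theta)=\sum_{j=0}^{k}\cos\!\Bigl(\bigl(2j+\tfrac12\bigr)\theta\Bigr)=\frac{\sin\!\bigl((2k+\tfrac32)\theta\bigr)+\sin\frac{\theta}{2}}{2\sin\theta},
\]
and control the (sign--changing) kernels $D_k$ by the Askey--Steinig estimates, the decrease of $\{\lambda_k\}$ absorbing the negative contributions; since $B_{n-k}/B_n\le 1$ only helps, the bulk estimates from the proof of Theorem \ref{thm:koumandos-2007-ext-viet-ramanujan} (which are valid for every $\alpha\in(0,1)$) go through essentially verbatim.

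\textbf{Step 3 (critical scale $\theta\asymp 1/n$).} Write $\theta=\psi/n$ with $\psi$ in a fixed compact subset of $(0,\infty)$. Using $(b)_m/(c)_m=\tfrac{\Gamma(c)}{\Gamma(b)}m^{b-c}\bigl(1+O(1/m)\bigr)$ and $(1-\alpha)_k/k!=\tfrac{k^{-\alpha}}{\Gamma(1-\alpha)}\bigl(1+O(1/k)\bigr)$, the coefficient $\lambda_k$ equals $n^{-(b-c)}(n-k)^{b-c}k^{-\alpha}$ up to a fixed positive constant and a relative error $O(1/k)$; a summation--by--parts (Euler--Maclaurin) estimate then identifies $\Lambda_n(\psi/n)$ with a Riemann sum,
\[
\Lambda_n\!\Bigl(\tfrac{\psi}{n}\Bigr)=c(\alpha,b,c)\,n^{1-\alpha}\!\int_0^{\psi}t^{-\alpha}\cos t\,\Bigl(1-\tfrac{t}{\psi}\Bigr)^{b-c}dt+o\!\bigl(n^{1-\alpha}\bigr),\qquad c(\alpha,b,c)>0,
\]
uniformly for $\psi$ on compacta. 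A one--variable analysis of $G(\psi):=\int_0^{\psi}t^{-\alpha}\cos t\,(1-t/\psi)^{b-c}dt$ (positive for small $\psi$; its minimum over the range left uncovered by Step 2 is attained at $\psi=3\pi/2$) reduces positivity in this regime to $G(3\pi/2)=\int_0^{3\pi/2}t^{-\alpha}\cos t\,(1-\tfrac{2t}{3\pi})^{b-c}dt\ge 0$, which by definition holds exactly for $\alpha\ge\alpha_0'$. Combining Steps 1--3 yields $\sum_{k=0}^n c_k\cos k\theta>0$ for all $n$ and $\theta\in(0,\pi)$ when $\alpha\ge\alpha_0'$. Finally $\alpha\mapsto G(3\pi/2)$ is continuous, tends to $+\infty$ as $\alpha\to 1^-$ and is negative near $\alpha=0$ (assuming $b,c$ are such that \eqref{eqn:integral-cosine-sum} has a root in $(0,1)$), and a sign argument as for $\alpha_0$ gives a unique such root $\alpha_0'$; since the weight $(1-2t/(3\pi))^{b-c}\le 1$ damps the negative lobe of $\cos t$ on $(\tfrac\pi2,\tfrac{3\pi}2)$ more than the positive lobe on $(0,\tfrac\pi2)$, one obtains $\alpha_0'\le\alpha_0$, with equality iff $b=c$ — the stated link with the Littlewood--Salem--Izumi constant, which also resolves Problem \ref{prob:vietoris-extension}.

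\textbf{Main obstacle.} The crux is Step 3: the summand is only of order $k^{-\alpha}$, so $\Lambda_n$ is of size $n^{1-\alpha}$ and the Riemann--sum error must be controlled to $o(n^{1-\alpha})$ uniformly in $\psi$, which demands careful summation--by--parts remainder bounds together with peeling off the first few (largest) terms exactly. Equally delicate are the extremal claim that $\min G(\psi)$ over the relevant range occurs at $\psi=3\pi/2$, and dovetailing the threshold $\theta\asymp 1/n$ between Steps 2 and 3 so that no interval of $\theta$ is left uncovered; the monotonicity of $\{\lambda_k\}$ and the reinforced decay $\lambda_k/\lambda_{k-1}\le 1-\alpha/k$ from Step 1 are exactly what make both Steps 2 and 3 possible.
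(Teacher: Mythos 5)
Your route is genuinely different from the paper's. The paper factors $c_k=a_kb_k$ with $a_{2k}=a_{2k+1}=B_{n-k}/B_n$ and $b_k$ the Koumandos coefficients \eqref{eqn:define-koum-coeff}, checks that $\{a_k\}$ is positive and decreasing (using $b\geq c$), and applies Abel summation (Lemma \ref{lemma:abel-sum}) so that positivity for $\alpha\geq\alpha_0$ falls out of Theorem \ref{thm:koumandos-2007-ext-viet-ramanujan} in a few lines; only then does it turn to a scaling-limit computation, $n^{\alpha-1}\sum_k c_k\cos(k\theta/n)\to\frac{1}{\Gamma(1-\alpha)}\int_0^{\theta}t^{-\alpha}\cos t\,(1-t/\theta)^{b-c}\,dt$, to identify the new constant $\alpha_0'$ and to relate $\mathcal{P}(\alpha,b-c)$ to $\mathcal{K}(\alpha)$ via the ${}_2F_3$ evaluations. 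You skip that easy reduction entirely and instead redo the whole Askey--Steinig/Koumandos kernel machinery with the weight $B_{n-k}/B_n$ carried through. If completed, your plan would actually deliver more than the paper's argument, since the paper's treatment of the range $\alpha_0'\leq\alpha<\alpha_0$ is only the asymptotic identification of the constant, not a finite-$n$ positivity proof. Your Step 1 computations (the pairing identity, the ratio $\lambda_k/\lambda_{k-1}=\bigl(1-\tfrac{b-c}{b+n-k}\bigr)\bigl(1-\tfrac{\alpha}{k}\bigr)$, the closed form for $D_k$) are all correct.

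The gap is that Steps 2 and 3 are programmes rather than proofs, and the difficulty sits exactly where you place it. Two concrete problems. First, in Step 2 the claim that ``$B_{n-k}/B_n\leq 1$ only helps'' is sound only when the unweighted partial sums $\sum_{j\leq k}b_j\cos j\theta$ are already nonnegative, i.e.\ only for $\alpha\geq\alpha_0$ (which is precisely the paper's first step); in the extended range $\alpha_0'\leq\alpha<\alpha_0$ those unweighted sums are unbounded below, so the weight cannot be dismissed as a harmless perturbation --- it is the agent of positivity and must be tracked quantitatively --- and since it depends on $n$ the coefficients are not the restriction of a single fixed sequence, so Koumandos' bulk estimates cannot be cited ``verbatim''. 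Second, in Step 3 the asymptotic $\Lambda_n(\psi/n)=c\,n^{1-\alpha}G(\psi)+o(n^{1-\alpha})$ yields positivity for large $n$ only when $G>0$ strictly; at the borderline $\alpha=\alpha_0'$ one has $G(3\pi/2)=0$ and the sign is decided by the error term, while small $n$ must be handled separately --- this second-order analysis is the bulk of Koumandos' original paper and is left open here (as, in fairness, it is in the paper's own proof). You also assert without argument that the minimum of $G$ over the uncovered range occurs at $\psi=3\pi/2$, and the existence and uniqueness of the root $\alpha_0'$ in $(0,1)$ for general $b\geq c$ is only gestured at (note the paper's own observation that $\alpha_0'=0$ already when $b-c=1-\alpha_0$, so some restriction on $b-c$ is implicitly required).
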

\begin{proof}
For $k=0,1,\ldots,n$, choose
$a_{2k}=a_{2k+1}=\dfrac{B_{n-k}}{B_n}$ and $b_k$ be defined as in
Theorem \ref{thm:koumandos-2007-ext-viet-ramanujan}.
Then,
\begin{align*}
\sum_{k=0}^n c_k\cos{k\theta}
=\sum_{k=0}^n a_k b_k \cos{k\theta}
=\sum_{k=0}^{n-1}(a_k-a_{k+1})\sum_{j=0}^k b_j \cos{j\theta} +a_n\sum_{j=0}^n b_j \cos{j\theta}.
\end{align*}
Using Theorem \ref{thm:koumandos-2007-ext-viet-ramanujan},
we have for $n\in\mathbb{N}$,
$\displaystyle\sum_{k=0}^n b_j \cos{j\theta}>0$, $\alpha\geq\alpha_0$.
Also it is clear from the definition of $a_k$, $a_{2k}-a_{2k+1}=0$.
Now it remains to show that $a_{2k-1}>a_{2k}$ for $k=1,2,\ldots,[n/2]$.
Clearly
\begin{align*}
a_{2k-1}-a_{2k} =\dfrac{(c)_n}{(b)_n} \left[\dfrac{(b)_{n-2k+1}}{(c)_{n-2k+1}}-\dfrac{(b)_{n-2k}}{(c)_{n-2k}} \right]
                =\dfrac{(c)_n}{(b)_n}\dfrac{(b)_{n-2k}}{(c)_{n-2k}}.\dfrac{b-c}{c+n-2k} >0.
\end{align*}

We proved the result only for $\alpha\geq \alpha_0$. It remains to prove the result
for $\alpha_0'\leq \alpha\leq \alpha_0$. Note that for $b=c=1$, $\alpha_0'=\alpha_0$ and hence
we consider the case $b\neq 1$ and $c \neq 1$.
This can be obtained by finding a relation between $\alpha_0$ and $\alpha_0'$ below.
Now an easy computation leads to
\newline
$\displaystyle
\lim_{n\rightarrow \infty} \left( \dfrac{\theta}{n}\right)^{1-\alpha}
 \sum_{k=0}^n \dfrac{B_{n-k}}{B_n} \dfrac{(1-\alpha)_k}{k!} \cos{\left(\dfrac{k\theta}{n}\right)}
$
\begin{align*}
=\dfrac{1}{\Gamma(1-\alpha)}\lim_{n\rightarrow \infty}\left(\dfrac{k\theta}{n} \right)^{-\alpha}
\left(
 \dfrac{b}{(1+b-c)}\dfrac{(c)_n}{(b)_n}
 +
 \sum_{k=0}^{n-1} \dfrac{\theta}{n}\dfrac{(b)_{n-k}}{(c)_{n-k}}\dfrac{(c)_n}{(b)_n}
\right)
    \cos{\left(\dfrac{k\theta}{n}\right)}
.
 \end{align*}
Using the fact that $\displaystyle\lim_{n\rightarrow \infty}\dfrac{(c)_n}{(b)_n}=0$ for $b>c$,
we get
\begin{align*}
\lim_{n\rightarrow \infty}\left(\dfrac{k\theta}{n} \right)^{-\alpha}
\left[
 \dfrac{b}{(1+b-c)}\dfrac{(c)_n}{(b)_n}
\right]
    \cos{\left(\dfrac{k\theta}{n}\right)}
\rightarrow 0.
 \end{align*}
This gives
\newline
$
\displaystyle
\lim_{n\rightarrow \infty} \left( \dfrac{\theta}{n}\right)^{1-\alpha}
 \sum_{k=0}^n \dfrac{B_{n-k}}{B_n} \dfrac{(1-\alpha)_k}{k!} \cos{\left(\dfrac{k\theta}{n}\right)}
$
\begin{align*}
&=\dfrac{1}{\Gamma(1-\alpha)}  \lim_{n\rightarrow \infty}\left(\dfrac{k\theta}{n} \right)^{-\alpha}
 \sum_{k=0}^{n-1} \dfrac{\theta}{n}\dfrac{(b)_{n-k}}{(c)_{n-k}}\dfrac{(c)_n}{(b)_n}
    \cos{\left(\dfrac{k\theta}{n}\right)}\\
&=\dfrac{1}{\Gamma(1-\alpha)}  \lim_{n\rightarrow \infty}  \sum_{k=1}^{n}\dfrac{\theta}{n} \dfrac{(b)_{k}}{(c)_{k}}
\dfrac{(c)_n}{(b)_n}\left((n-k)\dfrac{\theta}{n} \right)^{-\alpha} \cos{\left((n-k)\dfrac{\theta}{n}\right)}.
\end{align*}
Let $(n-k)\frac{\theta}{n}=t$ so that $-\frac{\theta}{n}dk=dt$.
This makes the right hand side of the above expansion as
\begin{align*}
\dfrac{1}{\Gamma(1-\alpha)} \lim_{n \rightarrow \infty} \int_{0}^{\left(1-\frac{1}{n}\right)\theta}
t^{-\alpha} \cos{t}\left(1-\dfrac{t}{\theta}\right)^{b-c} dt
=\dfrac{1}{\Gamma(1-\alpha)} \int_0^{\theta} t^{-\alpha}\cos{t}\left(1-\dfrac{t}{\theta}\right)^{b-c} dt.
\end{align*}
For $\theta=\dfrac{3\pi}{2}$, the resulting integral is \eqref{eqn:integral-cosine-sum}.
A computation using Mathematica software gives the solution of the integral
\eqref{eqn:integral-cosine-sum} as the zero of the hypergeometric function
\begin{align}\label{hypergeometric2F3:solution-integral}
{}_2F_3\left[\dfrac{1-\alpha}{2},1-\dfrac{\alpha}{2};\dfrac{1}{2},\dfrac{1}{2}(2-\alpha+b-c),
    \dfrac{1}{2}(3-\alpha+b-c);\dfrac{-9\pi^2}{16}\right]=0
\end{align}
Let the zero of \eqref{hypergeometric2F3:solution-integral} be $\alpha_0'$.
Note that, $b=c$ gives $\alpha_0$ as in Theorem \ref{thm:koumandos-2007-ext-viet-ramanujan}.
Here $\alpha_0$ is obtained by the integral
$\displaystyle \int_0^{3\pi/2}\frac{\cos{t}}{t^{\alpha}} dt $.

The solution of $\displaystyle \int_0^{3\pi/2} t^{-\alpha}\cos{t}
\left(1-\dfrac{2t}{3\pi}\right)^{b-c} dt$ is given by
\begin{align}\label{eqn:solution-P-integral}
\begin{split}
\mathcal{P}(\alpha,b-c)&:=\dfrac{\Gamma(1+b-c)\Gamma(1-\alpha)}{\Gamma(2-\alpha+b-c)}
\left(\frac{3\pi}{2}\right)^{1-\alpha}\\
&\qquad{}_2F_3\left[\frac{1-\alpha}{2},1-\frac{\alpha}{2};\frac{1}{2},
\frac{2-\alpha+b-c}{2},\frac{3-\alpha+b-c}{2};-\frac{9\pi^2}{16}\right]
\end{split}
\end{align}
and the solution of integral
$\int_0^{3\pi/2}\dfrac{\cos{t}}{t^{\alpha}} dt$ is given by
\begin{align}\label{eqn:solution-K-integral}
\mathcal{K}(\alpha)&:=\dfrac{\Gamma(1-\alpha)}{\Gamma(2-\alpha)}
\left(\frac{3\pi}{2}\right)^{1-\alpha}{}_2F_3\left[\frac{1-\alpha}{2},1-\frac{\alpha}{2};\frac{1}{2},
\frac{2-\alpha}{2},\frac{3-\alpha}{2};-\frac{9\pi^2}{16}\right]
\end{align}
Since $b=c$ gives $\mathcal{P}(\alpha,b-c)=\mathcal{K}(\alpha)$ we have the relation between
$\mathcal{P}(\alpha,b-c)$ and  $\mathcal{K}(\alpha)$  as
\begin{align}\label{eqn:relation-Priyanka-alpha-Koumandos}
\mathcal{P}(\alpha,b-c) = \mathcal{K}(\alpha) g(\alpha,b-c) +h(\alpha,b-c),
\end{align}
where $g(\alpha,b-c)$  and $h(\alpha,b-c)$ are to be determined.

In case of $g(\alpha,b-c)\equiv 1$ we have the resultant as
difference between \eqref{eqn:solution-P-integral} and
\eqref{eqn:solution-K-integral} which is given as
\begin{align*}
&\mathcal{P}(\alpha, b-c)-\mathcal{K}(\alpha)\\
&=\left(\frac{3\pi}{2}\right)^{1-\alpha}\Gamma(1-\alpha) \bigg[
\frac{\Gamma(1+b-c)}{\Gamma(2-\alpha+b-c)}
\sum_{k=0}^{\infty} \dfrac
{\left(\frac{1-\alpha}{2}\right)_k \left(\frac{2-\alpha}{2}\right)_k}
{\left(\frac{1}{2}\right)_k\left(\frac{2-\alpha+b-c}{2}\right)_k
\left(\frac{3-\alpha+b-c}{2}\right)_k k!}\left(-\frac{9\pi^2}{16}\right)^k\\
&\hspace{4cm}- \frac{1}{\Gamma(2-\alpha)}\sum_{k=0}^{\infty} \dfrac
{\left(\frac{1-\alpha}{2}\right)_k \left(\frac{2-\alpha}{2}\right)_k}
{\left(\frac{1}{2}\right)_k\left(\frac{2-\alpha}{2}\right)_k
\left(\frac{3-\alpha}{2}\right)_kk!}\left(-\frac{9\pi^2}{16}\right)^k\bigg]\\
&=\left(\frac{3\pi}{2}\right)^{1-\alpha}\Gamma(1-\alpha)
\sum_{k=0}^{\infty}\dfrac
{\left(\frac{1-\alpha}{2}\right)_k \left(\frac{2-\alpha}{2}\right)_k}
{\left(\frac{1}{2}\right)_k k!}\chi(\alpha,b-c)\left(-\frac{9\pi^2}{16}\right)^{k}
\end{align*}
where
\begin{align*}
\chi(\alpha,b-c)&=\dfrac{\Gamma(1+b-c)}{\Gamma(2-\alpha+b-c)}
\dfrac{1}{\left(\frac{2-\alpha+b-c}{2}\right)_k
\left(\frac{3-\alpha+b-c}{2}\right)_k}
-\dfrac{1}{\Gamma(2-\alpha)}\dfrac{1}{\left(\frac{2-\alpha}{2}\right)_k
\left(\frac{3-\alpha}{2}\right)_k} \\
&=\dfrac{\Gamma(1+b-c)}{\Gamma(2-\alpha+b-c)}\frac{2^{2k}\Gamma(2-\alpha+b-c)}
{\Gamma(2-\alpha+b-c+2k)}
-\dfrac{1}{\Gamma(2-\alpha)}\frac{2^{2k}\Gamma(2-\alpha)}
{\Gamma(2-\alpha+2k)}\\
&=2^{2k}\left[\dfrac{\Gamma(1+b-c)}{\Gamma(2-\alpha+b-c+2k)}
-\dfrac{1}{\Gamma(2-\alpha+2k)}\right]
\end{align*}
So $\mathcal{P}(\alpha, b-c)-\mathcal{K}(\alpha)$ becomes
\begin{align}
\label{eqn:h-alpha-bc}
\begin{split}
h(\alpha,b-c)&:=\left(\frac{3\pi}{2}\right)^{1-\alpha}\Gamma(1-\alpha)
\sum_{k=0}^{\infty}
\dfrac{\left(\frac{1-\alpha}{2}\right)_k \left(1-\frac{\alpha}{2}\right)_k2^{2k}}
{(1/2)_k k!}\\
&\left(\frac{\Gamma(1+b-c)}{\Gamma(2-\alpha+b-c+2k)}-\frac{1}
{\Gamma(2-\alpha+2k)}\right)
\left(-\frac{9\pi^2}{16}\right)^k
\end{split}
\end{align}

Thus the above computation gives the relation
\begin{align}\label{eqn:relation-two-integral}
\int_0^{3\pi/2} t^{-\alpha}\cos{t}
\left(1-\dfrac{2t}{3\pi}\right)^{b-c} dt=\int_0^{3\pi/2}\frac{\cos{t}}{t^{\alpha}} dt +
h(\alpha,b-c)
\end{align}
where $h(\alpha,b-c)$ is given in \eqref{eqn:h-alpha-bc}.
Note that from \eqref{eqn:relation-two-integral} we observe that,
at $b=c$, $h(\alpha,0)$ vanishes and the solution of \eqref{eqn:integral-cosine-sum} say $\alpha_0'$ is given by the
zero of the integral $\displaystyle\int_0^{3\pi/2}\frac{\cos{t}}{t^{\alpha}} dt$
which is $\alpha_0$.

Further at $b=c+1-\alpha_0$, simple computation using mathematica yields that
$h(0,1-\alpha_0)=1$ and $\int_0^{3\pi/2}\dfrac{\cos{t}}{t^{\alpha}} dt=-1$ at $\alpha=0$.
Hence, using \eqref{eqn:relation-two-integral}, $\alpha_0'=0$ is the solution of \eqref{eqn:integral-cosine-sum} if
$b=c+1-\alpha_0$. Moreover, $h(\alpha,b-c)$ does not have closed or simple form.

In case of $h(\alpha, b-c)\equiv 0$, we get that
\begin{align*}
g(\alpha, b-c) = \dfrac{\mathcal{P}(\alpha,b-c)}{\mathcal{K}(\alpha)}
\end{align*}
which for $b=c$ gives $g(\alpha, b-c) =1$. In this case also $g(\alpha, b-c)$
does not have a closed form.
\end{proof}

\begin{remark}
Using numerical values, the value of $\alpha_0'$ may be expressed in terms of
$\alpha_0$ and $b-c$ as
\begin{align*}
\alpha_0'=\alpha_0-\beta_0(b-c)-\beta_1(b-c)^2+O[b-c]^3.
\end{align*}
where the constants $\beta_0=0.4334739\ldots$ and $\beta_1=0.02203153\ldots$,
approximately.
\end{remark}

Writing $a_k=\dfrac{a_k}{c_k}c_k$ and using summation by parts, the following result is immediate.
\begin{corollary}\label{cor:new-cosine-sum}
Suppose that $a_0\geq a_1\geq \cdots \geq a_n>0$ and $(b+n-k)ka_k\leq (c+n-k)(k-\alpha)a_{k-1}$, $1\leq k\leq n$,
 then for all positive integers $n$,
  \begin{align*}
 \sum_{k=0}^n a_k \cos {k\theta}>0, \quad \hbox{ $0<\theta<\pi$},
 \end{align*}
 holds for $\alpha\geq \alpha_0'.$
\end{corollary}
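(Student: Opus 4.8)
The plan is to deduce this corollary from Theorem \ref{thm:new-cosine-sum} by an Abel (summation-by-parts) argument, in exactly the manner announced in the sentence preceding the statement. First I would set $c_k$ to be the sequence defined in \eqref{dfn:ext-viet-coeff} (with $c_0 = 1$), and write each coefficient of the given sum as $a_k = (a_k/c_k)\,c_k$, so that
\begin{align*}
\sum_{k=0}^n a_k \cos k\theta = \sum_{k=0}^n \frac{a_k}{c_k}\, c_k \cos k\theta.
\end{align*}
Applying Lemma \ref{lemma:abel-sum} with $b_k = a_k/c_k$ and the "$c_k$" of that lemma replaced by $c_k\cos k\theta$, this becomes
\begin{align*}
\sum_{k=0}^n a_k \cos k\theta = \sum_{k=0}^{n-1}\left(\frac{a_k}{c_k}-\frac{a_{k+1}}{c_{k+1}}\right)\left(\sum_{j=0}^k c_j \cos j\theta\right) + \frac{a_n}{c_n}\sum_{k=0}^n c_k \cos k\theta.
\end{align*}
By Theorem \ref{thm:new-cosine-sum}, every partial sum $\sum_{j=0}^k c_j\cos j\theta$ is strictly positive on $(0,\pi)$ for $\alpha\ge\alpha_0'$, and $a_n/c_n>0$, so the whole expression is $\ge 0$, and strictly $>0$ provided the leading term $\frac{a_n}{c_n}\sum_{k=0}^n c_k\cos k\theta$ is positive (it is) — hence the sum is positive as soon as I verify that the ratio $a_k/c_k$ is nonincreasing in $k$, i.e. $a_k/c_k \ge a_{k+1}/c_{k+1}$ for $0\le k\le n-1$.

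The heart of the argument is therefore translating the hypothesis $a_0\ge a_1\ge\cdots\ge a_n>0$ together with $(b+n-k)k\,a_k \le (c+n-k)(k-\alpha)\,a_{k-1}$ for $1\le k\le n$ into monotonicity of $a_k/c_k$. I would split by parity, using the defining relations of $c_k$ from \eqref{dfn:ext-viet-coeff} and \eqref{eqn:define-Bk}. For the even-to-odd step $c_{2k}=c_{2k+1}$, so $a_{2k}/c_{2k}\ge a_{2k+1}/c_{2k+1}$ is exactly $a_{2k}\ge a_{2k+1}$, which is immediate from the decreasing hypothesis on $\{a_k\}$. For the odd-to-even step I need $a_{2k-1}/c_{2k-1}\ge a_{2k}/c_{2k}$; since $c_{2k-1}=c_{2k-2}$ and $c_{2k}/c_{2k-2} = \frac{B_{n-k}}{B_{n-k+1}}\cdot\frac{(1-\alpha)_k/k!}{(1-\alpha)_{k-1}/(k-1)!} = \frac{B_{n-k}}{B_{n-k+1}}\cdot\frac{k-\alpha}{k}$, and from \eqref{eqn:define-Bk} one computes $\frac{B_{n-k}}{B_{n-k+1}} = \frac{(c+n-k)}{(b+n-k)}$, the required inequality $a_{2k}/a_{2k-1}\le c_{2k}/c_{2k-1}$ reduces precisely to
\begin{align*}
\frac{a_{2k}}{a_{2k-1}} \le \frac{c+n-k}{b+n-k}\cdot\frac{k-\alpha}{k},
\end{align*}
i.e. to the second hypothesis $(b+n-k)k\,a_{2k}\le(c+n-k)(k-\alpha)\,a_{2k-1}$. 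A parallel check handles general $k$ in place of $2k$; one must be slightly careful about index ranges near $k=n$, since $c_{n+1}$ and $B_{-1}$ do not appear — but for the telescoped sum only ratios $a_k/c_k$ with $0\le k\le n$ occur, so no out-of-range quantity is needed.

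The main obstacle I anticipate is purely bookkeeping: matching the index conventions in \eqref{dfn:ext-viet-coeff} (where $c_{2k}=c_{2k+1}$ is indexed by $k$, so $c_j$ for general $j$ is $c_{2\lfloor j/2\rfloor}$) with the running index $k$ in the hypothesis $(b+n-k)k a_k\le(c+n-k)(k-\alpha)a_{k-1}$, and confirming that the ratio $B_{n-k}/B_{n-k+1} = (c+n-k)/(b+n-k)$ comes out with exactly the constants $b$ and $c$ appearing in the corollary's hypothesis. There is no analytic difficulty once the monotonicity of $a_k/c_k$ is established; the positivity then follows verbatim from Theorem \ref{thm:new-cosine-sum} via the displayed Abel identity. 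I would close by noting that strictness is retained because the final term $\frac{a_n}{c_n}\sum_{k=0}^n c_k\cos k\theta$ is strictly positive on $(0,\pi)$.
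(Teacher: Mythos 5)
Your proposal is exactly the paper's argument: the paper proves this corollary in a single line by writing $a_k=\frac{a_k}{c_k}c_k$ and summing by parts, and your verification that $a_k/c_k$ is nonincreasing (split by parity, using $B_{n-k}/B_{n-k+1}=(c+n-k)/(b+n-k)$) supplies precisely the details it omits. The only point both you and the paper leave implicit is that the Abel identity requires positivity of the partial sums $\sum_{j=0}^{m} c_j\cos j\theta$ for $m<n$ (with the coefficients still built from the fixed $n$), not merely the full sum asserted in Theorem \ref{thm:new-cosine-sum}.
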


Theorem \ref{thm:new-cosine-sum} motivates the following result which is the positivity of
corresponding sine sum, where the coefficients are given by \eqref{dfn:ext-viet-coeff}.

\begin{theorem}\label{thm:new-sine-sum}
Let $c_k$ be as in Theorem \ref{thm:new-cosine-sum}, then for $n\in {\mathbb{N}}$ and $0<\theta<\pi$, we have
\begin{align*}
\sum_{k=1}^{2n+1}c_k \sin{k\theta}>0, \qquad {\mbox{if, and only if,}} \quad \alpha\geq \alpha_0'
\end{align*}
and
\begin{align*}
\sum_{k=1}^{2n}c_k \sin{k\theta}>0 \qquad {\mbox{when}}
\qquad \alpha\geq \dfrac{3}{2}-\left(\dfrac{1+b}{2c}\right).
\end{align*}
\end{theorem}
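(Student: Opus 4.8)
The plan is to split the argument by the parity of the upper index, as in Theorem~\ref{thm:koumandos-2007-ext-viet-ramanujan}. The odd sum will be reduced to the cosine sum of Theorem~\ref{thm:new-cosine-sum} by a reflection identity, which produces the threshold $\alpha_0'$ and the ``only if'' part simultaneously; for the even sum that reduction leaves an unpaired term, so I would argue directly, by Abel summation combined with a splitting of $(0,\pi)$ in the spirit of the proof of Theorem~\ref{thm:new-positive-sine-sums}.

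For the odd case, the starting point is that, since $c_{2k+1}=c_{2k}$ and $c_0=c_1=1$,
\[
\sum_{k=1}^{2n+1}c_k\sin k\theta=\sum_{j=0}^{n}c_{2j}\bigl(\sin 2j\theta+\sin(2j+1)\theta\bigr)=2\cos\tfrac{\theta}{2}\sum_{j=0}^{n}c_{2j}\sin\bigl((2j+\tfrac12)\theta\bigr),
\]
and likewise $\sum_{k=0}^{2n+1}c_k\cos k\varphi=2\cos\tfrac{\varphi}{2}\sum_{j=0}^{n}c_{2j}\cos\bigl((2j+\tfrac12)\varphi\bigr)$. Since $\sin\bigl((2j+\tfrac12)\theta\bigr)=\cos\bigl((2j+\tfrac12)(\pi-\theta)\bigr)$ (because $\sin((2j+\tfrac12)\pi)=1$ and $\cos((2j+\tfrac12)\pi)=0$) and $\cos\tfrac{\pi-\theta}{2}=\sin\tfrac{\theta}{2}$, this yields the exact identity
\[
\sum_{k=1}^{2n+1}c_k\sin k\theta=\cot\tfrac{\theta}{2}\ \sum_{k=0}^{2n+1}c_k\cos\!\bigl(k(\pi-\theta)\bigr),\qquad 0<\theta<\pi.
\]
Because $\cot(\theta/2)>0$ on $(0,\pi)$, the odd sine sum is positive for all $n$ and all $\theta\in(0,\pi)$ if and only if the cosine sum $\sum_{k=0}^{2n+1}c_k\cos k\varphi$ is positive for all $n$ and all $\varphi\in(0,\pi)$, and by Theorem~\ref{thm:new-cosine-sum} together with the sharpness of $\alpha_0'$ (the root of \eqref{eqn:integral-cosine-sum}) this happens precisely when $\alpha\ge\alpha_0'$. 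The one point needing attention is that this cosine sum runs to index $2n+1$, one beyond the statement of Theorem~\ref{thm:new-cosine-sum}; I would re-run the Abel decomposition $c_k=a_kb_k$ with $a_{2k}=a_{2k+1}=B_{n-k}/B_n$, observing that the differences $a_{2j-1}-a_{2j}=(B_{n-j+1}-B_{n-j})/B_n$ are nonnegative for $1\le j\le n-1$, so that only the two topmost terms need to be grouped before invoking Koumandos' cosine positivity, while the limiting-integral step in the proof of Theorem~\ref{thm:new-cosine-sum} is insensitive to this shift of index.

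For the even case the same pairing leaves the term $c_{2n}\sin 2n\theta$ unmatched. Factoring $c_k=a_kb_k$ as above, Abel's formula gives
\[
\sum_{k=1}^{2n}c_k\sin k\theta=\sum_{\ell=1}^{n}(a_{2\ell-1}-a_{2\ell})\sum_{j=1}^{2\ell-1}b_j\sin j\theta+a_{2n}\sum_{j=1}^{2n}b_j\sin j\theta,
\]
in which the odd partial sine sums of $\{b_k\}$ are positive for $\alpha\ge\alpha_0$ by part~(2) of Theorem~\ref{thm:koumandos-2007-ext-viet-ramanujan} and the last sum is positive for $\alpha\ge\tfrac12$ by part~(1); since the weights $a_{2\ell-1}-a_{2\ell}$ are nonnegative as soon as $B_{n-\ell+1}\ge B_{n-\ell}$, the case $\alpha\ge\tfrac12$ is immediate. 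To reach the stated range $\alpha\ge\tfrac32-\tfrac{1+b}{2c}$ one uses that the weights $B_{n-k}/B_n$ decrease in $k$, which damps the later (possibly negative) contributions, and that the binding instance is $n=1$: there the sum is $c_1\sin\theta+c_2\sin 2\theta=\sin\theta\,(c_1+2c_2\cos\theta)$, which is positive on $(0,\pi)$ exactly when $c_1\ge 2c_2$, i.e. $B_1=\tfrac{1+b-c}{c}\ge 2(1-\alpha)$, that is $\alpha\ge\tfrac32-\tfrac{1+b}{2c}$. For general $n$ I would split $(0,\pi)$ into $0<\theta<\tfrac{\pi}{2n}$ (all terms $\sin k\theta$, $1\le k\le 2n$, positive), $\tfrac{\pi}{2n}\le\theta\le\pi-\tfrac{\pi}{2n}$ (an Abel/Fej\'er-type estimate comparing the least coefficient difference with the tail, as in the proof of Theorem~\ref{thm:new-positive-sine-sums}), and $\pi-\tfrac{\pi}{2n}<\theta<\pi$ (put $\theta=\pi-t$, so $\sum_{k=1}^{2n}c_k\sin k\theta=\sum_{k=1}^{2n}(-1)^{k+1}c_k\sin kt$; pairing consecutive equal coefficients shows that its leading term as $t\to 0^+$ is a positive multiple of $\sum_{i=0}^{n-1}c_{2i}-2nc_{2n}$, whose nonnegativity is what the two-term inequality above encodes).

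The main obstacle will be this last regime of the even case: once $\alpha<\tfrac12$, the even-sum positivity of $\{b_k\}$ is no longer available, and one must show that the single two-term inequality $c_1\ge 2c_2$ already forces $\sum_{i=0}^{n-1}c_{2i}\ge 2nc_{2n}$ and, more than that, $\sum_{k=1}^{2n}c_k\sin k\theta>0$ for every $n$ and every $\theta$ near $\pi$; the damping from the decreasing weights $B_{n-k}/B_n$ is exactly what should make this possible, but quantifying it uniformly in $n$ is the delicate part. A secondary, purely bookkeeping, difficulty is the index matching in the odd case flagged above, which arises because the coefficients \eqref{dfn:ext-viet-coeff} themselves depend on $n$.
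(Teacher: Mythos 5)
Your odd case is essentially the paper's own argument: the paper pairs $c_{2j}=c_{2j+1}$ and writes $S_{2n+1}(\pi-\theta)=2\sin\frac{\theta}{2}\sum_{k=0}^{n}e_k\cos\left(\left(2k+\frac{1}{2}\right)\theta\right)$ with $e_k=c_{2k}$, then invokes positivity of that half-integer cosine sum; your $\cot(\theta/2)$ identity is the same reduction, and your handling of the index shift to $2n+1$ is, if anything, more careful than the paper's.

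The genuine gap is the even case on the full stated range $\alpha\geq\frac{3}{2}-\frac{1+b}{2c}$, which you yourself flag as ``the delicate part'' and do not close. The missing idea is Lemma \ref{lemma:belov-1995-sine-sum} (Belov's criterion): for a decreasing positive sequence, positivity of \emph{all} sine partial sums on $(0,\pi)$ is equivalent to the purely combinatorial condition $\sum_{k=1}^{m}(-1)^{k-1}k\,c_k\geq 0$. Because $c_{2j}=c_{2j+1}$, this alternating sum telescopes to $\sum_{k=0}^{n-1}c_{2k}-2n\,c_{2n}$ --- the very quantity you extract only as the leading behaviour near $\theta=\pi$ --- so no splitting of $(0,\pi)$ and no Fej\'er-type middle-range estimate is needed at all. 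The paper then verifies $\sum_{k=0}^{n-1}c_{2k}-2n\,c_{2n}\geq 0$ by Abel summation in the weights $B_{n-k}/B_n$ followed by the evaluation $\sum_{k=0}^{n-1}\frac{(1-\alpha)_k}{k!}=\frac{(2-\alpha)_{n-1}}{(n-1)!}$, which renders the resulting inequality $\frac{1+b-c}{2c}\geq 1-\alpha$ independent of $n$; this is precisely the mechanism by which the $n=1$ two-term inequality ``forces'' the general one, the step you identified as needed but could not supply. Without such a device your plan has no route to a proof once $\alpha<\frac{1}{2}$: in your Abel decomposition the odd partial sums $\sum_{j=1}^{2\ell-1}b_j\sin j\theta$ are still positive by part (2) of Theorem \ref{thm:koumandos-2007-ext-viet-ramanujan}, but the final term $a_{2n}\sum_{j=1}^{2n}b_j\sin j\theta$ loses its sign guarantee below $\alpha=\frac{1}{2}$, and your sketch leaves that contribution unbounded near $\theta=\pi$.
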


\begin{proof}
Clearly
\begin{align*}
S_n(\theta)=\sum_{k=1}^n c_k \sin{k\theta}
\Longrightarrow S_{2n+1}(\pi-\theta)=2\sin{\dfrac{\theta}{2}}\sum_{k=0}^n
    e_k\cos\left(2k+\dfrac{1}{2}\right)\theta,
\end{align*}
where $e_k=c_{2k}=c_{2k+1}$, $k=0,1,2, \ldots, n$.
This gives $S_{2n+1}(\theta)>0$ for $0<\theta<\pi$ and $\alpha\geq\alpha_0'$.
On the other hand, for the even sine sum $S_{2n}(\theta)$,
in view of Lemma \ref{lemma:belov-1995-sine-sum},
$\displaystyle\sum_{k=1}^{2n}(-1)^{k-1}kc_k\geq 0$ implies
\begin{align*}
&\sum_{k=0}^{n-1}c_k-2n c_n=\sum_{k=0}^{n-1}
    \dfrac{B_{n-k}}{B_n}\dfrac{(1-\alpha)_k}{k!}-2n\dfrac{B_0}{B_n}\dfrac{(1-\alpha)_n}{n!}\\
&=\sum_{k=0}^{n-2}(B_{n-k}-B_{n-k-1})
    \sum_{j=0}^{k}\dfrac{(1-\alpha)_j}{j!}
        +B_1\sum_{k=0}^{n-1}\dfrac{(1-\alpha)_k}{k!}-2nB_0\dfrac{(1-\alpha)_n}{n!}\\
&\geq \dfrac{b}{c}\sum_{k=0}^{n-1}\dfrac{(1-\alpha)_k}{k!}-2n\dfrac{b}{1+b-c}\dfrac{(1-\alpha)_n}{n!}\\
&=\dfrac{b}{c}\dfrac{(2-\alpha)_{n-1}}{(n-1)!}-2\dfrac{b}{1+b-c}\dfrac{(1-\alpha)_n}{(n-1)!}\geq0,\quad
\hbox{only if $\alpha\geq \dfrac{3}{2}-\left(\dfrac{1+b}{2c}\right)$}.
\qedhere
\end{align*}
\end{proof}
The interpretation of Theorem \ref{thm:new-cosine-sum} and
Theorem \ref{thm:new-sine-sum} in
geometric function theory reflected as a concept of generalized stable functions by the
authors separately.
Theorem \ref{thm:new-sine-sum} leads to the following remark.
\begin{remark}
Note that
for the range $\alpha_0'\leq\alpha<\frac{3}{2}-\left(\frac{1+b}{2c}\right)$,
using the techniques similar to \cite{brown-dai-wang-2007-ext-viet-ramanujan} gives
\begin{align*}
S_{2n}(\theta)>0 \hbox{\quad for \quad  } 0<\theta\leq\pi-\dfrac{\pi}{2n}
\end{align*}
which is a better range of $\alpha$ than the one available in literature.
On the other hand, for $b=1+\beta$ and $c=1$
in Theorem \ref{thm:new-sine-sum}, we have $S_{2n}(\theta)>0$ for $2\alpha\geq 1-\beta$. This
result is different from the one given in \cite{saiful-2012-stable-results-in-math} due to the
difference in the coefficients $c_k$. Further, if $\beta=0$, then Theorem \ref{thm:new-sine-sum}
reduces to Vietoris result \cite{vietoris-1958} of $S_{2n}(\theta)>0$
for $\alpha \geq 1/2$ which is the best possible so far. However, if
$\beta\geq1$ then
Theorem \ref{thm:new-sine-sum} gives $S_{2n}(\theta)>0$ for all $\alpha\in(0,1)$ and $\theta\in(0,\pi)$
which improves the range for $\alpha$ than the one available in the literature.
\end{remark}

The following corollary of Theorem \ref{thm:new-cosine-sum} is immediate consequence of
Lemma \ref{lemma:abel-sum} and the non-negativity of
$\displaystyle\sum_{k=0}^n \dfrac{B_{n-k}}{B_n}\dfrac{(1-\alpha)_k}{k!}\cos{k\theta}$ for $\alpha\geq\alpha_0$.
\begin{corollary}\label{cor:sigma-bc-cosine}
Suppose that $e_0\geq e_1\geq \cdots \geq e_n>0$ and $(b+n-k)ke_k\leq (c+n-k)(k-\alpha)e_{k-1}$, $1\leq k\leq n$,
 then for all positive integers $n$ and $\alpha\geq \alpha_0$,
 \begin{align}\label{eqn:}
 \sum_{k=0}^n e_k \cos\left(2k+\dfrac{1}{2}\right)\theta>0, \quad \hbox{ $0<\theta<\pi$}.
 \end{align}
\end{corollary}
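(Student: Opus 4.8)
The plan is to handle $\displaystyle\sum_{k=0}^n e_k\cos\big((2k+\tfrac12)\theta\big)$ by Abel's summation formula (Lemma~\ref{lemma:abel-sum}) against the Koumandos sequence $\gamma_k:=(1-\alpha)_k/k!$, $k\ge0$. The first ingredient I need is the positivity of \emph{every} partial sum
$$\Sigma_k(\theta):=\sum_{j=0}^k\gamma_j\cos\big((2j+\tfrac12)\theta\big),\qquad 0<\theta<\pi,\quad\alpha_0\le\alpha<1 .$$
Applying the identity used in the proof of Theorem~\ref{thm:new-sine-sum} to the Koumandos sequence $\{b_m\}$ (where $b_{2j}=b_{2j+1}=\gamma_j$) instead of $\{c_k\}$ gives $\sum_{m=1}^{2k+1}b_m\sin\big(m(\pi-\theta)\big)=2\sin\tfrac\theta2\,\Sigma_k(\theta)$; since part~(2) of Theorem~\ref{thm:koumandos-2007-ext-viet-ramanujan} asserts $\sum_{m=1}^{2k+1}b_m\sin m\psi>0$ for every $k$, every $\psi\in(0,\pi)$ and every $\alpha\in[\alpha_0,1)$, taking $\psi=\pi-\theta$ yields $\Sigma_k(\theta)>0$ for all such $k,\theta,\alpha$. (If one prefers, this identity is the even/odd grouping $\sum b_m e^{\iota m\psi}=(1+e^{\iota\psi})\sum_j\gamma_j e^{2\iota j\psi}$ with $1+e^{\iota\psi}=2\cos\tfrac\psi2\,e^{\iota\psi/2}$, specialised at $\psi=\pi-\theta$ where $\cos\tfrac\psi2=\sin\tfrac\theta2$ and $\sin\big((2j+\tfrac12)(\pi-\theta)\big)=\cos\big((2j+\tfrac12)\theta\big)$, after taking imaginary parts.)

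With this in hand, write $e_k=\rho_k\gamma_k$ with $\rho_k:=e_k/\gamma_k>0$ (indeed $\gamma_k>0$, since $\alpha<1\le k$ for $k\ge1$). Lemma~\ref{lemma:abel-sum} gives
$$\sum_{k=0}^n e_k\cos\big((2k+\tfrac12)\theta\big)=\sum_{k=0}^{n-1}(\rho_k-\rho_{k+1})\,\Sigma_k(\theta)+\rho_n\,\Sigma_n(\theta) ,$$
and, since each $\Sigma_k(\theta)>0$ and $\rho_n>0$, it suffices to show $\{\rho_k\}$ is non-increasing, i.e.\ $e_{k+1}/e_k\le\gamma_{k+1}/\gamma_k=(k+1-\alpha)/(k+1)$ for $0\le k\le n-1$. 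The hypothesis $(b+n-k)k\,e_k\le(c+n-k)(k-\alpha)e_{k-1}$ rewrites as $e_k/e_{k-1}\le\frac{(c+n-k)(k-\alpha)}{(b+n-k)\,k}$, and since $b\ge c>0$ forces $\frac{c+n-k}{b+n-k}\le1$, this gives $e_k/e_{k-1}\le(k-\alpha)/k$, which after reindexing is exactly the required monotonicity. Hence every summand on the right above is nonnegative and the last one is strictly positive, completing the proof.

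I expect the only genuine work to be the first paragraph: packaging Koumandos' sine-sum inequality into the partial-sum form $\Sigma_k(\theta)>0$ with the correct bookkeeping — the $\theta\mapsto\pi-\theta$ substitution, the half-angle factor $2\sin\tfrac\theta2$, and the identity $\sin\big((2j+\tfrac12)(\pi-\theta)\big)=\cos\big((2j+\tfrac12)\theta\big)$ — after which the remainder is routine summation by parts driven by the hypothesis. One point worth flagging: it is the plain Koumandos coefficients $\gamma_k$, not the $n$-tuned reference $\tfrac{B_{n-k}}{B_n}\gamma_k$ from the paragraph preceding the corollary, that make the monotonicity step go through uniformly — the relevant ratio $B_{n-k}/B_{n-k+1}$ equals $(c+n-k)/(b+n-k)$ for $k\le n-1$ but degenerates at $k=n$ to $B_0/B_1=c/(1+b-c)$, which need not be $\le c/b$ when $c<1$ — and this is also why the relevant threshold here is $\alpha_0$ rather than the smaller $\alpha_0'$ of Theorem~\ref{thm:new-cosine-sum}.
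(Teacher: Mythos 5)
Your proof is correct, and it is in fact more careful than the paper's own justification, which consists of the single sentence preceding the corollary: the result is declared ``immediate'' from Lemma \ref{lemma:abel-sum} and the positivity of $\sum_{k=0}^n \frac{B_{n-k}}{B_n}\frac{(1-\alpha)_k}{k!}\cos k\theta$, i.e.\ the paper intends summation by parts against the $n$-tuned reference $\frac{B_{n-k}}{B_n}\frac{(1-\alpha)_k}{k!}$ (whose shifted-cosine partial sums must anyway first be extracted from the odd sine sum of Theorem \ref{thm:new-sine-sum} by exactly the half-angle identity you use; the quoted sum with argument $k\theta$ is not itself the needed building block). You instead sum by parts against the plain Koumandos sequence $\gamma_k=(1-\alpha)_k/k!$ and invoke part (2) of Theorem \ref{thm:koumandos-2007-ext-viet-ramanujan} directly. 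The two routes differ only in the choice of majorant, but yours is the one that actually closes the argument: as you observe, the hypothesis ratio $\frac{(c+n-k)(k-\alpha)}{(b+n-k)k}$ matches the $n$-tuned reference ratio only for $1\le k\le n-1$, while at $k=n$ that reference ratio involves $B_0/B_1=c/(1+b-c)$ and the monotonicity of the quotient can fail when $c<1$; with plain $\gamma_k$ one only needs $\frac{c+n-k}{b+n-k}\le 1$, which holds for every $k$ because $b\ge c$. The same choice explains why the stated threshold is $\alpha_0$ rather than the $\alpha_0'$ of Theorem \ref{thm:new-cosine-sum}. Your bookkeeping in the first paragraph (the substitution $\psi=\pi-\theta$, the factor $2\sin\frac{\theta}{2}$, and $\sin\bigl((2j+\frac{1}{2})(\pi-\theta)\bigr)=\cos\bigl((2j+\frac{1}{2})\theta\bigr)$) checks out, as does the reindexed monotonicity step, including the initial case giving $\rho_1\le\rho_0$.
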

A different extension of Theorem \ref{thm:vietoris-1958} is addressed in Section \ref{sec:positivity-2}.

\section{Positive Trigonometric sums for Generalized Coefficients}\label{sec:positivity-2}
The objective of finding conditions on $\{r_k\}$ defined by \eqref{eqn:r_k-coefficients}
such that the corresponding sine sum is positive which is given as below.
\begin{theorem}\label{thm:b-by-c}
If $\alpha,\beta>0$ and $\lambda,\mu>0$ such that
$\alpha<\beta$, $\mu\geq 1+\lambda$ and $\lambda\beta-\alpha\mu<0$ then the following
holds.
\begin{align}\label{eqn:lambda-Nr-and-mu-Dr}
\tilde{S}_n(x)=\sin x+\sum_{k=2}^n \dfrac{(k+\alpha)^{\lambda}}{(k+\beta)^{\mu}}\sin kx>0, \quad x\in(0,\pi).
\end{align}
\end{theorem}
\begin{proof}
Let $r_k=\dfrac{(k+\alpha)^{\lambda}}{(k+\beta)^{\mu}}$ with $\alpha$, $\beta$ $\lambda$ and $\mu$ satisfying
the hypothesis of the theorem. Clearly \eqref{eqn:lambda-Nr-and-mu-Dr} is true for $n=1$.
For $n=2$,
\begin{align*}
\tilde{S}_2(x)=\sin x+\dfrac{(2+\alpha)^{\lambda}}{(2+\beta)^{\mu}}\sin 2x
>\sin x\left(1-2\dfrac{(2+\alpha)^{\lambda}}{(2+\beta)^{\mu}}\right)>0\quad \hbox{ for $\mu\geq \lambda+1$ }.
\end{align*}
For $k\geq 3$ we observe that
$r_k$ is decreasing, which is obtained by taking the logarithmic derivative of $r_k$ to get
\begin{align*}
r_k'=r_k\left[\frac{(\lambda-\mu)k+(\lambda\beta-\mu\alpha)}{(k+\alpha)(k+\beta)}\right]
\end{align*}
which is clearly negative.
Writing $P_k:=\dfrac{(1+1/(k+\alpha))^{\lambda}}{(1+1/(k+\beta))^{\mu}}$
and taking logarithmic derivative we get that
\begin{align*}
\frac{P_k'}{P_k}=\frac{\mu}{(k+\beta)(1+k+\beta)}-\frac{\lambda}{(k+\alpha)(1+k+\beta)}.
\end{align*}
This expression is positive only if
$\mu(k+\alpha)(k+1+\alpha)-\lambda(k+\beta)(k+1+\beta)>0$ or equivalently
\begin{align*}
\frac{\lambda}{\mu}<\frac{(k+\alpha)(k+1+\alpha)}{(k+\beta)(k+1+\beta)}<1
\end{align*}
which is true by hypothesis. This gives
$\displaystyle \dfrac{(k+\alpha)^{\lambda}}{(k+\beta)^{\mu}}-\dfrac{(k+1+\alpha)^{\lambda}}{(k+1+\beta)^{\mu}}$
is decreasing. Hence we have
\begin{align*}
\frac{(k+\alpha)^{\lambda}}{(k+\beta)^{\mu}}-2\frac{(k+1+\alpha)^{\lambda}}{(k+1+\beta)^{\mu}}
+\frac{(k+2+\alpha)^{\lambda}}{(k+2+\beta)^{\mu}}>0
\end{align*}
leading to the observation that $r_k$ is convex. Thus we can write
\begin{align*}
\tilde{S}_n(x)>\left[1-2\frac{(2+\alpha)^{\lambda}}{(2+\beta)^{\mu}}+\frac{(3+\alpha)^{\lambda}}{(3+\beta)^{\mu}}\right]\sin x
+\frac{(n+\alpha)^{\lambda}}{(n+\beta)^{\mu}}\frac{\sin nx}{2}.
\end{align*}
Using the fact that $\dfrac{(n+\beta)^{\mu}}{(n+\alpha)^{\lambda}}>n$ and
$1-2\dfrac{(2+\alpha)^{\lambda}}{(2+\beta)^{\mu}}+\dfrac{(3+\alpha)^{\lambda}}{(3+\beta)^{\mu}}>\dfrac{1}{4}$ we get
that $\tilde{S}_n(x)>0$ for $\pi/n\leq x\leq \pi-\pi/n$ and $n\geq 3$.

For $\pi-\pi/n<x<\pi$, substituting $x=\pi-t$ we get $0<t<\pi/n$. Then we have
\begin{align*}
\tilde{S}_n(x)=\tilde{S}_n(\pi-t)
=\sin{t}+t^{\lambda+\mu}\sum_{k=2}^n (-1)^{k-1}\frac{(kt+\alpha t)^{\lambda}}{(kt+\beta t)^{\mu}}\sin{kt}
\end{align*}
Writing $kt=\theta$ and $A=\dfrac{\left(1+\frac{\alpha}{k}\right)^{\lambda}}{\left(1+\frac{\beta}{k}\right)^{\mu}}$,
it is easy to see that the function $f(\theta)=A\dfrac{\sin{\theta}}{\theta^{\mu-\lambda}}$
is positive and
\begin{align*}
f'(\theta)=A\left(\frac{\cos{\theta}}{\theta^{\mu-\lambda}}
-\frac{(\mu-\lambda)\sin{\theta}}{\theta^{\mu-\lambda+1}}\right)<0.
\end{align*}
Therefore $f(\theta)$ is a decreasing function of $\theta\in(0,\pi)$.
Note that $f(\pi)>0$.
Also, for odd $k$, $(-1)^{k-1}\dfrac{(kt+\alpha t)^{\lambda}}{(kt+\beta t)^{\mu}}\sin kt>0$.
Using this with the fact that
\begin{align*}
\frac{((2k-1)t+\alpha t)^{\lambda}}{((2k-1)t+\beta t)^{\mu}}\sin(2k-1)t
-\frac{(2kt+\alpha t)^{\lambda}}{(2kt+\beta t)^{\mu}}\sin2kt>0
\end{align*}
we get $\tilde{S}_n(x)>0$ for $n$ odd. Similar argument leads to concluding the result
for $n$ is even. Combining all these cases we get $\tilde{S}_n(x)>0$ for all $n$ and the
proof is complete.
\end{proof}

If we choose $\lambda<0$ and $\mu>0$ then proceeding in a similar way
as in the proof of Theorem \ref{thm:b-by-c} the following result can be obtained.
\begin{theorem}\label{thm:new-positive-sine-sums}
For $\alpha\geq0$, $\beta\geq0$ and $\lambda,\mu>0$ such that $\lambda+\mu\geq1$,
\begin{align}\label{eqn:posi-sin-sum-qk}
\hat{S}_n(x)=\sin{x}+\sum_{k=2}^n \frac{\sin{kx}}{(k+\alpha)^{\lambda}(k+\beta)^{\mu}}>0,
\quad \mbox{for\, } x\in(0,\pi).
\end{align}
\end{theorem}

\begin{proof}
The positivity of the sine polynomial \eqref{eqn:posi-sin-sum-qk}
follows easily by comparing it with the well-known Fe\'jer-Jackson polynomial \cite[eqn. 24.1]{koumandos-survey}, namely
\begin{align*}
\sum_{k=1}^n \dfrac{\sin kx}{k}, \qquad x\in(0,\pi),
\end{align*}
as it can be easily shown that the ratios of the corresponding coefficients
\begin{align*}
\left(\dfrac{1}{(k+\alpha)^{\lambda}(k+\beta)^{\mu}}\right)/\left(\dfrac{1}{k}\right)
\end{align*}
form a decreasing sequence. Therefore, \eqref{eqn:posi-sin-sum-qk} can be proved easily using
the Abel summation formula and positivity of  Fe\'jer-Jackson
polynomial. We omit the details of the proof.
\end{proof}

For the positivity of the corresponding cosine polynomial $\hat{C}_n(x)$ we use the
following lemma given in \cite{saiful-swami-2011-CAMWA}.

\begin{lemma}
\label{lemma:saiful-positivity}
\cite{saiful-swami-2011-CAMWA}
Let $\alpha\geq0,\lambda\geq1, b_0=2, b_1=1$ and $b_k=\dfrac{1}{(k+\alpha)^{\lambda}}$,
$k\in\mathbb{N},k\geq2$, then for all $n\in\mathbb{N}$, the following inequalities
hold.
\begin{align*}
\frac{b_0}{2}+\sum_{k=1}^n b_k\cos{k\theta}>0 \quad \hbox{and} \quad \sum_{k=1}^nb_k\sin{k\theta}>0,
\quad \hbox{ for $0<\theta<\pi$.}
\end{align*}
\end{lemma}

However, due to its special nature compare to the results exist in the literature, we provide it as a theorem
with the outline of the proof. We use the following notation in the next theorem.
\begin{align} \label{eqn:q_k-coefficients}
q_0&=2,\quad  q_1=1 \quad q_k:=\frac{1}{(k+\alpha)^{\lambda}(k+\beta)^{\mu}}, \qquad
k\geq 2, \quad \mbox{ where} \quad \alpha,\beta\geq 0.
\end{align}
\begin{theorem}\label{thm:new-positive-cosine-sum}
Suppose that $\alpha\geq0$, $\beta\geq 0$ and $\lambda,\mu \geq 0$ such that $\lambda+\mu\geq1$
then,
\begin{align*}
\hat{C}_n(x):= \frac{q_0}{2}+\sum_{k=1}^n q_k \cos{k \theta} &>0,\quad \hbox{for $0< \theta<\pi$.}
\end{align*}
\end{theorem}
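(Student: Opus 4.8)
The plan is to derive the cosine inequality from the sine inequality that has just been proved (Theorem~\ref{thm:new-positive-sine-sums}), using Belov's criterion (Lemma~\ref{lemma:belov-1995-sine-sum}) as the bridge; no fresh kernel estimates are needed. Introduce the single coefficient sequence $a_0:=1$, $a_1:=q_1=1$, and $a_k:=q_k=\dfrac{1}{(k+\alpha)^{\lambda}(k+\beta)^{\mu}}$ for $k\ge2$. The first step is to check that $\{a_k\}_{k\ge0}$ is a positive non-increasing sequence, which is exactly what places it within the scope of Lemma~\ref{lemma:belov-1995-sine-sum}: positivity is immediate, the denominator $(k+\alpha)^{\lambda}(k+\beta)^{\mu}$ is non-decreasing in $k$ because $\alpha,\beta\ge0$ and $\lambda,\mu\ge0$, so $q_k$ is non-increasing for $k\ge2$, and finally
\[
q_2=\frac{1}{(2+\alpha)^{\lambda}(2+\beta)^{\mu}}\le\frac{1}{2^{\lambda}\,2^{\mu}}=\frac{1}{2^{\lambda+\mu}}\le\frac12<1=a_1=a_0 ,
\]
where $\lambda+\mu\ge1$ is used in the last two inequalities.

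Second, I would apply the necessity half of Lemma~\ref{lemma:belov-1995-sine-sum}. Since $\hat S_n(x)=\sum_{k=1}^{n}a_k\sin kx$ for precisely this sequence, Theorem~\ref{thm:new-positive-sine-sums} guarantees $\sum_{k=1}^{n}a_k\sin kx>0$ for every $n$ and every $x\in(0,\pi)$. As $\{a_k\}$ is positive, non-increasing and $a_1=1>0$, Belov's criterion forces the structural condition
\[
\sum_{k=1}^{n}(-1)^{k-1}k\,a_k\ \ge\ 0\qquad\text{for all }n\ge2 .
\]
Third, I would feed this back into the sufficiency half of the same lemma. From $q_0/2=1=a_0$, $q_1=1=a_1$ and $q_k=a_k$ for $k\ge2$, we have the identity
\[
\hat C_n(x)=\frac{q_0}{2}+\sum_{k=1}^{n}q_k\cos kx=\sum_{k=0}^{n}a_k\cos kx ,
\]
and Lemma~\ref{lemma:belov-1995-sine-sum} then yields $\sum_{k=0}^{n}a_k\cos kx>0$ for all $n$ and all $x\in(0,\pi)$, i.e.\ $\hat C_n(x)>0$, as claimed — and this handles all admissible $\alpha,\beta,\lambda,\mu$ simultaneously.

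The plan has no genuinely hard analytic step; the one point that must be treated with care is that ``decreasing'' in Belov's lemma is to be read as non-increasing, so that the tie $a_0=a_1$ (and, for some parameter ranges, further coincidences among the $q_k$) is permitted — this is the convention used throughout the Vietoris-type literature, compare Theorem~\ref{thm:vietoris-1958}. Should one prefer an argument that does not quote Belov, the fallback is to reprove $\hat C_n(x)>0$ directly by the same three-interval Abel-summation / Fej\'er-kernel scheme used for $\hat S_n$, or to try to reduce to Lemma~\ref{lemma:saiful-positivity} via the weighted AM--GM bound $(k+\alpha)^{\lambda}(k+\beta)^{\mu}\le\bigl(k+\tfrac{\lambda\alpha+\mu\beta}{\lambda+\mu}\bigr)^{\lambda+\mu}$; but the latter only compares coefficients, and since cosine sums are not monotone in their coefficients this route still leaves a summation-by-parts step, with the interval $\pi-\pi/n<x<\pi$ again being the delicate case. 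I therefore expect the Belov route above to be the cleanest.
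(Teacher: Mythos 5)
Your argument is correct, but it is not the route the paper takes for the bulk of the parameter range. The paper's proof splits into cases: for $\mu\geq 1$ (and symmetrically $\lambda\geq 1$) it applies Abel summation to peel off the factor $(k+\alpha)^{-\lambda}$ and reduces the positivity of $\hat C_n$ to the single-factor cosine inequality of Lemma~\ref{lemma:saiful-positivity}, invoking Belov's criterion only for the leftover case $0\leq\lambda,\mu<1$, $\lambda+\mu\geq 1$ (and even there it gives no details on how the condition $\sum_{k=1}^n(-1)^{k-1}kq_k\geq 0$ is verified). You instead run everything through Lemma~\ref{lemma:belov-1995-sine-sum} in both directions: Theorem~\ref{thm:new-positive-sine-sums} gives $\sum a_k\sin kx>0$ for all $n$, the necessity half of Belov then yields $\sum_{k=1}^n(-1)^{k-1}ka_k\geq 0$, and the final clause of the lemma converts this into the cosine positivity. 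This is logically sound (Theorem~\ref{thm:new-positive-sine-sums} is proved independently, so there is no circularity), it covers all admissible $\alpha,\beta,\lambda,\mu$ in one stroke, and it even supplies the justification the paper omits for its residual case; what it buys is uniformity and brevity, at the price of making Theorem~\ref{thm:new-positive-cosine-sum} entirely dependent on the full strength of Theorem~\ref{thm:new-positive-sine-sums} (all $n$, all of $(0,\pi)$), whereas the paper's Abel-summation route is self-contained modulo Lemma~\ref{lemma:saiful-positivity}. Your two caveats — that ``decreasing'' in Belov's lemma must be read as non-increasing so that $a_0=a_1=1$ is admissible, and that the monotonicity check $q_2\leq 2^{-(\lambda+\mu)}\leq 1/2$ uses $\lambda,\mu\geq 0$ — are exactly the right points to flag, and both are consistent with how the paper itself applies Belov to this same sequence.
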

\begin{proof}
Let $n\in \mathbb{N}$ and $0<\theta<\pi$, using Lemma \ref{lemma:abel-sum}
$\hat{C}_n(x)$ can be rewritten as
\begin{align*}
&\hat{C}_n(x)=1+\cos{\theta}
+\sum_{k=2}^n \frac{1}{(k+\alpha)^{\lambda}(k+\beta)^{\mu}}\cos{k\theta}\\
&=\left[1-\frac{1}{(2+\alpha)^{\lambda}} \right](1+\cos{\theta})+\frac{1}{(n+\alpha)^{\lambda}}
\left[ 1+\cos{\theta}+\sum_{k=2}^n\frac{1}{(k+\beta)^{\mu}}\cos{k\theta}\right]\\
&\quad \quad +\sum_{k=2}^{n-1}\left[\left(\frac{1}{(k+\alpha)^{\lambda}}-\frac{1}{(k+1+\alpha)^{\lambda}}\right)
\left(1+\cos{\theta}+\sum_{j=2}^k\frac{\cos{j\theta}}{(j+\beta)^{\mu}} \right) \right]\\
&>0 \quad \hbox{for $\mu\geq 1$ and $\lambda\geq0$ using Lemma \ref{lemma:saiful-positivity}}.
\end{align*}
Proceeding in the similar fashion, we can also prove that it also holds for $\mu\geq0$ and $\lambda\geq1$.
Now for the case $0\leq \mu <1$ and $0\leq \lambda<1$ such that $\mu+\lambda\geq 1$,
the result follows from Belov's criterion \cite{belov-1995-sine-sum}.
\end{proof}

Note that $q_{n+1}=0$ for $\hat{C}_n(x)$. This means
\begin{align*}
\Delta^2q_{n-1}=q_{n+1}-2q_n+q_{n-1}=\frac{1}{(n-1+\alpha)^{\lambda}(n-1+\beta)^{\mu}}
-\frac{2}{(n+\alpha)^{\lambda}(n+\beta)^{\mu}}
\end{align*}
fails to be non-negative and hence
$\{q_k\}$ does not satisfy the conditions of
\rm{\cite[Theorem 1.2.8]{milovanovi-mitrinovi-rassias-1994-book-polynomials}}
for $n>2$, which implies that
Theorem \rm{\ref{thm:new-positive-cosine-sum}} cannot be obtained using
the Fej\'er criterion \rm{\cite[p.310]{milovanovi-mitrinovi-rassias-1994-book-polynomials}}.
On the other hand, Abel summation formula and Lemma \ref{lemma:saiful-positivity} guarantees
the positivity of $\hat{C}_n(x)$.
However, both procedures will provide the positivity of $\hat{S}_n(x)$.
Hence it is suggested that finding the reasons and an improved procedure may formulate
another direction of research in the positivity of trigonometric sums.

To provide graphical illustration to the interested readers, specific values
of the parameters are considered. Here we chose
$\alpha=.2$, $\beta=.4$, $\lambda=.3$, $\mu=.7$  and $\theta\in(0,\pi)$ for various values of $n$.
Figure \ref{fig:1}\subref{fig1a} and Figure \ref{fig:1}\subref{fig1b}
shows $\hat{C}_n(x)$ and $\hat{S}_n(x)$ for
$n=20$(continuous line), $n=30$(dash or hyphens) and $n=40$(dotted lines)
for the above values $\alpha=.2$, $\beta=.4$, $\lambda=.3$, $\mu=.7$.

\begin{figure}
\begin{subfigure}[b]{.4\linewidth}
\centering
\includegraphics[width=\linewidth]
{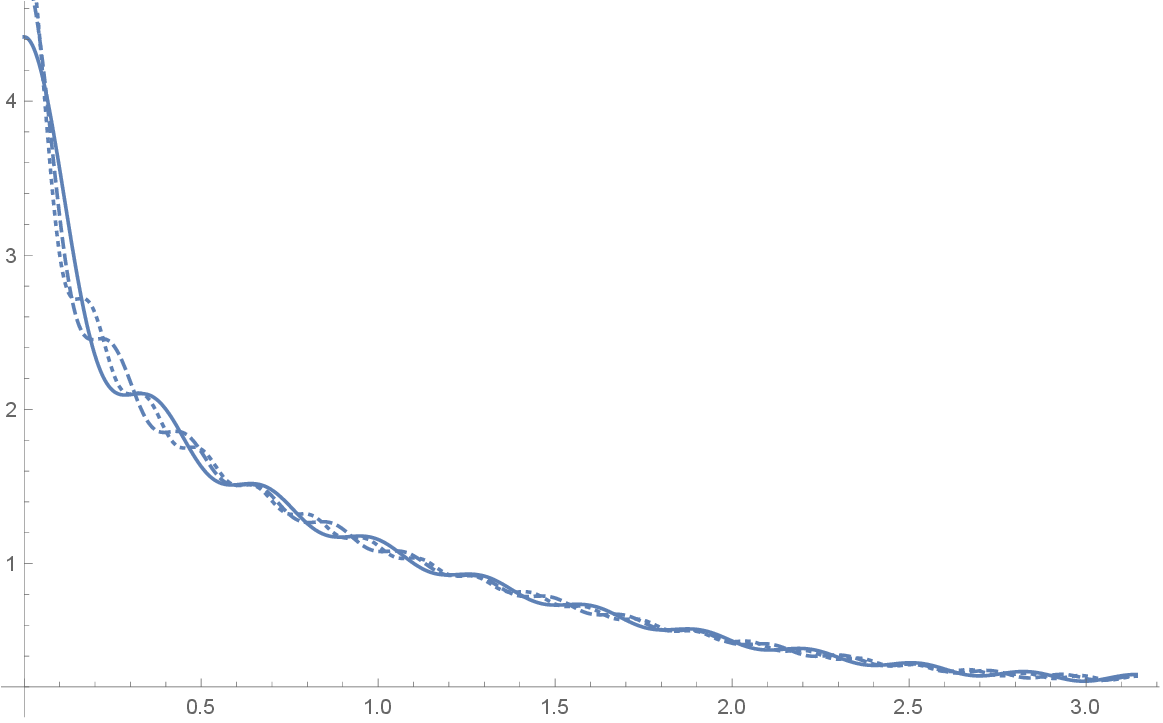}
\caption{}\label{fig1a}
\end{subfigure}\hfill
\begin{subfigure}[b]{.4\linewidth}
\centering
\includegraphics[width=\linewidth]{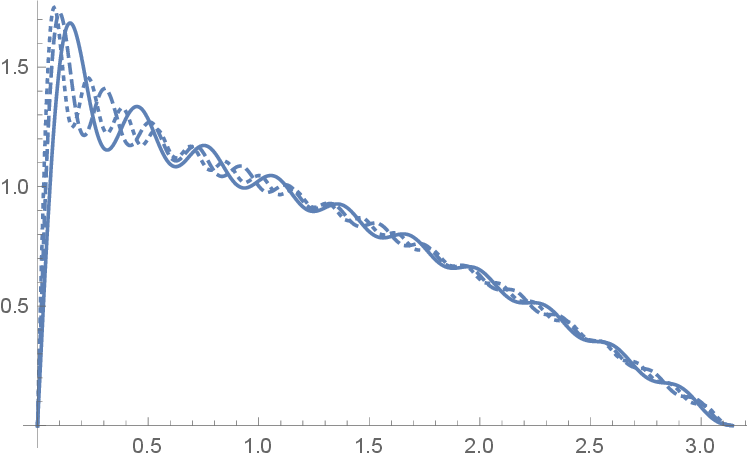}
\caption{}\label{fig1b}
\end{subfigure}%
\caption{\subref{fig1a}: $\hat{C}_n(x)$ and \subref{fig1b}:
$\hat{S}_n(x)$ for $n=20, 30, 40$.}
\label{fig:1}
\end{figure}

Similarly, Figure \ref{fig:2}\subref{fig2a} shows the graph of $\displaystyle\sum_{k=0}^{n} q_kz^k$
for $\alpha=.2$, $\beta=.4$, $\lambda=.3$, $\mu=.7$   and $|z|=1$, when
$n=20$(continuous line), $n=30$(dash or hyphens) and $n=40$(dotted lines). In
Figure \ref{fig:2}\subref{fig2b} the graph of $\displaystyle\sum_{k=0}^{n} q_kz^k$
for $\alpha=.2$, $\beta=.4$, $\lambda=.3$, $\mu=.7$   and $|z|=1$, when
$n=75$(continuous line), $n=100$(dash or hyphens) and $n=125$(dotted lines) is illustrated.

\begin{figure}
\begin{subfigure}[b]{.4\linewidth}
\centering
\includegraphics[width=\linewidth]{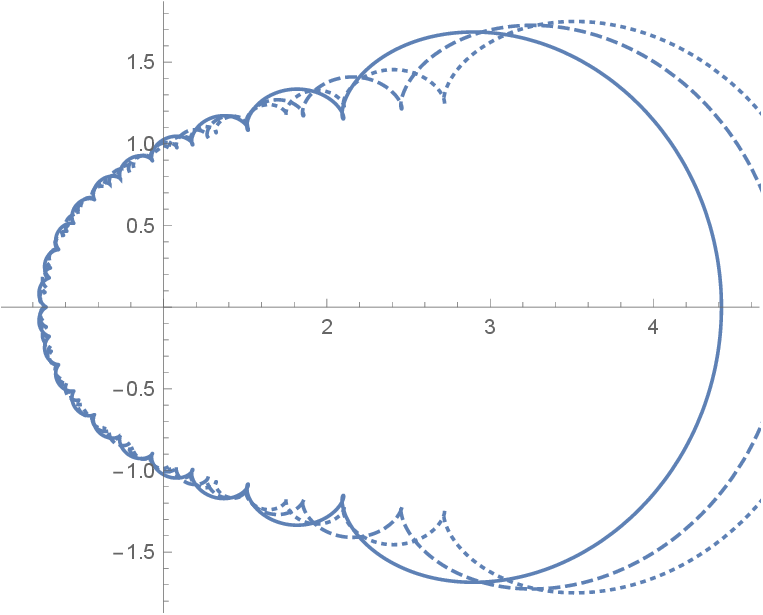}
\caption{}\label{fig2a}
\end{subfigure}\hfill
\begin{subfigure}[b]{.45\linewidth}
\centering
\includegraphics[width=\linewidth]{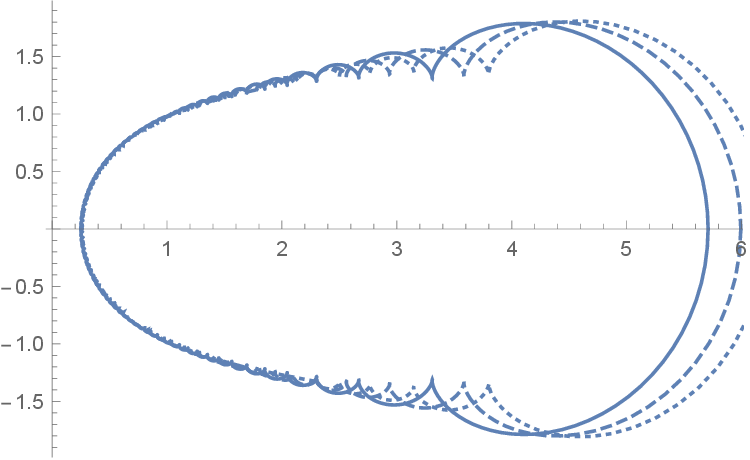}
\caption{}\label{fig2b}
\end{subfigure}%
\caption{$\displaystyle\sum_{k=0}^{n} q_kz^k$ for $\alpha=.2,\beta=.4,\lambda=.3,\mu=.7$  and $|z|=1$
with \subref{fig2a} for $n=20,30, 40$ and \subref{fig2b} for $n=75, 100, 125$.}
\label{fig:2}
\end{figure}

Note that a conclusion similar to Theorem \ref{thm:b-by-c} cannot be made for cosine sums with the same procedure.
Nevertheless, Theorem \ref{thm:b-by-c} is of much importance since there is no such result available in the
literature with the coefficients in the numerator having the parameter involving $k$ as in \eqref{eqn:r_k-coefficients}.
Further, similar to the positivity results involving the coefficients given in \eqref{eqn:q_k-coefficients},
positivity of sine sums and cosine sums involving the coefficients given in Theorem \ref{thm:b-by-c}
will also have wide applications.

A closely related result involving Jacobi polynomials is given in \cite{lewis-1979-siam}.
Generalization of the polynomials given in \cite{sangal-swaminathan-geom-sigma-bc} in line with
\cite{Muir} requires establishing the positivity results similar to the one given in \cite{lewis-1979-siam}
by extending the coefficients to the one given in Theorem \ref{thm:b-by-c}. More specifically,
it leads to finding conditions on $\delta$, $\lambda$, $\alpha$, $\beta$ such that
\begin{equation}\label{myeqn}
\sum_{k=0}^n
\frac{(1+\lambda)_{n-k}}{(1+\delta)_{n-k}}
\frac{(1+\lambda)_k}{(1+\delta)_k}
\frac{P_k^{(\alpha,\beta)}(x)}{P_k^{(\alpha,\beta)}(1)}
z^k \neq 0,
\end{equation}
where $P_k^{(\alpha,\beta)}(x)$ is the well known Jacobi polynomial.

If we substitute $\delta=1$ in ($\ref{myeqn}$), then we get
$$
\sum_{k=0}^n
\frac{(1+\lambda)_{n-k}}{(1)_{n-k}}
\frac{(1+\lambda)_k}{(1)_k}
\frac{P_k^{(\alpha,\beta)}(x)}{P_k^{(\alpha,\beta)}(1)}
z^k \neq 0,
$$
which, for the values, $0\leq \lambda \leq \alpha+\beta$, $\alpha\geq \beta > -\infty$,
is true. See \cite{lewis-1979-siam} for details.
Hence a
result similar to Theorem \ref{thm:b-by-c} for cosine sums will be of equal interest.
Imposing specific monotonic condition on the coefficients $\{a_k\}$ in
Theorem \ref{thm:new-positive-sine-sums} and Theorem \ref{thm:new-positive-cosine-sum},
summation by parts yields the following result.
\begin{corollary}\label{cor:new-positive-sum}
Let $\alpha\geq0,\beta\geq0$ and $\lambda\geq0,\mu\geq0$ such that $\lambda+\mu\geq 1$.
Also suppose that $\{a_k\}$ be a sequence of positive real numbers such that
\begin{align*}
(k+1+\alpha)^{\lambda}(k+1+\beta)^{\mu}a_{k+1} \leq (k+\alpha)^{\lambda}(k+\beta)^{\mu}
a_k\leq \cdots \leq (2+\alpha)^{\lambda}(2+\beta)^{\mu}a_2 \leq a_1
\end{align*}
and $a_1 \leq \dfrac{a_0}{2}$ holds.
Then, for $0<\theta<\pi$ and $n\in\mathbb{N}$, the following inequalities hold:
\begin{align*}
\frac{a_0}{2}+\sum_{k=1}^n a_k \cos{k\theta}>0 \quad and \quad \sum_{k=1}^n a_k \sin{k\theta}>0.
\end{align*}
\end{corollary}

This corollary has many interesting applications particularly in
finding the geometric properties like univalency, starlikeness, convexity
and close-to-convexity of analytic functions in the unit disc and this objective is addressed,
together with the details on these technical terms, in
\cite{sangal-swaminathan-geom-sigma-bc}.

We provide the next result as the generalization of \cite[Theorem 2.2]{saiful-swami-2011-CAMWA} on the
monotonicity of the cosine sums.
\begin{theorem}\label{thm:monotonicity-alpha-beta-cosine-sum}
Let $\alpha\geq0$, $\beta\geq 0$ and $\lambda\geq0,\mu\geq0$, then for every positive
integer $n$, we have
\begin{align}\label{eqn:new-mono-sum}
\frac{d}{d\theta}\left(\cos{\frac{\theta}{2}}\left(1+\cos{\theta}+\sum_{k=2}^n \frac{\cos{k\theta}}
{k(k+\alpha)^{\lambda}(k+\beta)^{\mu}} \right) \right)<0,\quad \mbox{for $0<\theta<\pi$}.
\end{align}
\end{theorem}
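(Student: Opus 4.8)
The plan is to follow the classical Brown--Koumandos strategy, reducing the monotonicity statement to the positivity of an auxiliary sine-type sum, and then to invoke Theorem \ref{thm:new-positive-sine-sums} (or, more precisely, the summation-by-parts machinery behind Corollary \ref{cor:new-positive-sum}). First I would carry out the differentiation on the left-hand side of \eqref{eqn:new-mono-sum}. Writing $a_k = \frac{1}{k(k+\alpha)^{\lambda}(k+\beta)^{\mu}}$ for $k\geq 2$, $a_1 = 1$, $a_0 = 2$, the expression inside the derivative is $\cos(\theta/2)\bigl(\tfrac{a_0}{2}+\sum_{k=1}^n a_k\cos k\theta\bigr)$. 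Differentiating and using the product rule together with the identity $2\cos(\theta/2)\sin k\theta = \sin\bigl((k+\tfrac12)\theta\bigr) + \sin\bigl((k-\tfrac12)\theta\bigr)$ and $2\sin(\theta/2)\cos k\theta = \sin\bigl((k+\tfrac12)\theta\bigr) - \sin\bigl((k-\tfrac12)\theta\bigr)$, the derivative collapses — after the usual telescoping among the half-integer frequencies — into $-\tfrac12 \sin(\theta/2)$ times a sum of the form $\sum_{k\geq 1} c_k \sin k\theta$ (or a closely related half-angle sine sum), where the new coefficients $c_k$ are built from consecutive differences of the $k a_k$. The precise bookkeeping is routine but must be done carefully; the upshot is that \eqref{eqn:new-mono-sum} is equivalent to the positivity on $(0,\pi)$ of a sine sum whose coefficients are essentially $\{k a_k\}_{k\geq 1} = \{1\} \cup \{\tfrac{1}{(k+\alpha)^{\lambda}(k+\beta)^{\mu}}\}_{k\geq 2}$.

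Once the problem is in this form, the key observation is that the relevant coefficient sequence is exactly (up to the normalization $q_0=2$, $q_1=1$) the sequence $\{q_k\}$ of \eqref{eqn:q_k-coefficients} with the factor $k$ in $a_k$ cancelling the $k$ produced by differentiation. Hence the required sine-sum positivity is precisely the content of Theorem \ref{thm:new-positive-sine-sums}, valid for $\alpha,\beta\geq 0$ and $\lambda+\mu\geq 1$. For the remaining range $0\leq\lambda+\mu<1$ (the theorem only assumes $\lambda,\mu\geq 0$), I would instead verify that $\{k a_k\}$ is positive and decreasing and that it satisfies Belov's criterion $\sum_{k=1}^m (-1)^{k-1} k (k a_k) \geq 0$, i.e.\ $\sum (-1)^{k-1} k^2 a_k \geq 0$; this is a sign-alternating sum with decreasing absolute values of the terms $k/(k+\alpha)^{\lambda}(k+\beta)^{\mu}$ for small exponents, so Lemma \ref{lemma:belov-1995-sine-sum} applies. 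Alternatively, the whole range $\lambda,\mu\geq 0$ can be absorbed by arranging the argument so that Corollary \ref{cor:new-positive-sum} is applied directly, since its hypotheses are exactly the monotonicity conditions $(k+1+\alpha)^{\lambda}(k+1+\beta)^{\mu}a_{k+1}\leq (k+\alpha)^{\lambda}(k+\beta)^{\mu}a_k$, which hold for our $a_k$.

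The main obstacle I anticipate is the first step: getting the telescoping in the differentiation exactly right so that the derivative is genuinely $-\tfrac12\sin(\theta/2)$ times a \emph{clean} sine sum with the expected coefficients, rather than picking up leftover boundary terms at $k=1$ and $k=n$. The $k=1$ term (coming from $a_0/2$ and $a_1\cos\theta$) and the terminal $k=n$ term need separate attention; typically one checks that the $k=n$ boundary contribution is of the form $(\text{const})\cdot a_n \cdot (\text{bounded factor})$ and is dominated by the leading term, exactly as in the $0<x<\pi/n$ / $\pi/n\leq x\leq\pi-\pi/n$ / $\pi-\pi/n<x<\pi$ trichotomy used in the proof of Theorem \ref{thm:new-positive-sine-sums}. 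So in the worst case the proof reduces to re-running that three-interval argument for the differentiated sum; but I expect the cleaner route — identifying the differentiated expression with a known positive sine sum and quoting Theorem \ref{thm:new-positive-sine-sums} together with Belov's Lemma \ref{lemma:belov-1995-sine-sum} to cover all $\lambda,\mu\geq0$ — to work without repeating that casework.
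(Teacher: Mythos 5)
Your reduction breaks down in two places, and the second one is fatal for exactly the range of parameters where the theorem says something beyond Theorems \ref{thm:new-positive-sine-sums} and \ref{thm:new-positive-cosine-sum}.

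First, the ``collapse''. With $C_n(\theta)=1+\cos\theta+\sum_{k=2}^n a_k\cos k\theta$ and $a_k=\frac{1}{k(k+\alpha)^{\lambda}(k+\beta)^{\mu}}$, the product-to-sum identities turn the $k$-th block of $-\frac{d}{d\theta}\bigl(\cos\frac{\theta}{2}\,C_n(\theta)\bigr)$ into $\frac{(2k+1)a_k}{4}\sin\bigl(k+\frac12\bigr)\theta+\frac{(2k-1)a_k}{4}\sin\bigl(k-\frac12\bigr)\theta$, so after regrouping you obtain a \emph{half-integer} frequency sum $\sum_m d_m\sin\bigl(m+\frac12\bigr)\theta$ with $d_m$ proportional to $(2m+1)(a_m+a_{m+1})$ --- not $\sin\frac{\theta}{2}$ times an integer-frequency sine sum, and certainly not ``precisely the content of Theorem \ref{thm:new-positive-sine-sums}'', which concerns $\sum q_k\sin k\theta$. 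The paper avoids this bookkeeping altogether: for $\lambda+\mu\geq1$ it notes that $\sin\frac{\theta}{2}>0$ and $\cos\frac{\theta}{2}>0$ on $(0,\pi)$ and shows the two summands $\frac12\sin\frac{\theta}{2}\,C_n(\theta)$ and $\cos\frac{\theta}{2}\bigl(\sin\theta+\sum_{k\geq2}\frac{\sin k\theta}{(k+\alpha)^{\lambda}(k+\beta)^{\mu}}\bigr)$ are \emph{separately} positive, the sine part by Theorem \ref{thm:new-positive-sine-sums} and the cosine part $C_n(\theta)$ by one further summation by parts against the partial sums of $1+\sum_{j}\cos(j\theta)/j$.

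Second, and more seriously, your handling of $0\leq\lambda+\mu<1$ cannot work. In that range the sine sum $\sin\theta+\sum_{k\geq2}\frac{\sin k\theta}{(k+\alpha)^{\lambda}(k+\beta)^{\mu}}$ is genuinely \emph{not} positive on all of $(0,\pi)$ (already for $\lambda=\mu=0$ one has $\sin\theta+\sin2\theta=\sin\theta(1+2\cos\theta)<0$ near $\pi$), so no decomposition into separately positive pieces can succeed. Belov's criterion fails as well: the relevant terms $k\cdot(ka_k)=k/\bigl((k+\alpha)^{\lambda}(k+\beta)^{\mu}\bigr)\sim k^{1-\lambda-\mu}$ are \emph{increasing} when $\lambda+\mu<1$, not decreasing as you assert, so the alternating sums go negative (e.g.\ $1-2+3-4<0$ at $\lambda=\mu=0$); and Corollary \ref{cor:new-positive-sum} explicitly assumes $\lambda+\mu\geq1$, so it cannot absorb this range either. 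The paper's proof for $0<\lambda+\mu<1$ rests on an idea your proposal has no counterpart for: apply Abel summation to the \emph{undifferentiated} product, writing $\cos\frac{\theta}{2}\,C_n(\theta)$ as a nonnegative combination of the blocks $\cos\frac{\theta}{2}\bigl(1+\sum_{j=1}^{k}\cos(j\theta)/j\bigr)$ (the coefficients $(k+\alpha)^{-\lambda}(k+\beta)^{-\mu}$ being decreasing), and then invoke the Brown--Koumandos theorem \cite{brown-koumandos-1997-mono-trig-sum}, which says the derivative of each such block is negative. Without this step, or an equivalent one, your argument establishes the theorem only for $\lambda+\mu\geq1$.
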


\begin{proof}
To prove \eqref{eqn:new-mono-sum}, it is sufficient to show the positivity of
\begin{align*}
\frac{1}{2}\sin{\frac{\theta}{2}}\left(1+\cos{\theta}+\sum_{k=2}^n
\frac{\cos{k\theta}}{k(k+\alpha)^{\lambda}(k+\beta)^{\mu}} \right)+\cos{\frac{\theta}{2}}
\left(\sin{\theta}+\sum_{k=2}^n \frac{\sin{k\theta}}{(k+\alpha)^{\lambda}(k+\beta)^{\mu}}\right).
\end{align*}

Note that the case $\lambda+\mu=0$ coinciding with the case $\gamma=1$ given in
\cite[Lemma 2.3]{saiful-swami-2011-CAMWA}.
Hence it remains to prove the theorem for $\lambda +\mu >0$. We consider this in two parts, viz.,
$0<\lambda+\mu<1$ and $\lambda+\mu \geq 1$.

Clearly,  from Theorem \ref{thm:new-positive-sine-sums}, we have
$\sin{\theta}+\displaystyle\sum_{k=2}^n \frac{\sin{k\theta}}{(k+\alpha)^{\lambda}(k+\beta)^{\mu}}>0$ for
$0<\theta<\pi$ and $\lambda\geq0$, $\mu\geq0$ such that $\lambda+\mu\geq1$. So it remains to prove the
positivity of $1+\cos{\theta}+\displaystyle\sum_{k=2}^n\frac{\cos{k\theta}}{k(k+\alpha)^{\lambda}(k+\beta)^{\mu}}$
for $\lambda+\mu\geq 1$ which, using Lemma \ref{lemma:abel-sum},  can be written as
\begin{align*}
&1+\cos{\theta} +\sum_{k=2}^n\frac{\cos{k\theta}}{k(k+\alpha)^{\lambda}(k+\beta)^{\mu}} \\
&=(1+\cos{\theta})\left( 1-\frac{1}{(2+\alpha)^{\lambda}(2+\beta)^{\mu}}\right)
    +\frac{1}{(n+\alpha)^{\lambda}(n+\beta)^{\mu}}
\left(1+\sum_{k=1}^n \frac{\cos{k\theta}}{k}\right)\\
&+\sum_{k=2}^{n-1}\left(\left( \frac{1}{(k+\alpha)^{\lambda}(k+\beta)^{\mu}}
-\frac{1}{(k+1+\alpha)^{\lambda}(k+1+\beta)^{\mu}}\right)\left(1+\cos{\theta}
+\sum_{j=2}^k\frac{\cos{j\theta}}{j} \right)\right)\\
&>0 \qquad \hbox{using Theorem \ref{thm:new-positive-cosine-sum}, \, for}
    \quad \lambda+\mu\geq 1, \quad 0<\theta<\pi.
\end{align*}

For proving the case $0<\lambda+\mu<1$,
let us suppose $b_0=b_1=1$ and $b_k=\dfrac{1}{(k+\alpha)^{\lambda}(k+\beta)^{\mu}}$
for $k\geq 2$. Then \eqref{eqn:new-mono-sum} can be rewritten as:
 \begin{align*}
& \frac{d}{d\theta}\left(\cos{\frac{\theta}{2}}\left(1+\cos{\theta}+\sum_{k=2}^n \frac{\cos{k\theta}}
{k(k+\alpha)^{\lambda}(k+\beta)^{\mu}} \right) \right)\\
&=\frac{d}{d\theta}\left(\sum_{k=1}^{n-1}(b_k-b_{k+1})\left[\left(1+\sum_{j=1}^k
\frac{\cos{j\theta}}{j}\right)\cos{\frac{\theta}{2}}\right]
+b_n\cos{\frac{\theta}{2}}\left(1+\sum_{j=1}^n\frac{\cos{j\theta}}{j}\right)\right)\\
&=\sum_{k=1}^{n-1}(b_k-b_{k+1})\frac{d}{d\theta}\left[\left(1+\sum_{j=1}^k
\frac{\cos{j\theta}}{j}\right)\cos{\frac{\theta}{2}}\right]
+b_n\frac{d}{d\theta}\left[\cos{\frac{\theta}{2}}\left(1+\sum_{j=1}^n
\frac{\cos{j\theta}}{j}\right)\right]\\
&<0 \quad \hbox{using \cite[Lemma 2.3]{saiful-swami-2011-CAMWA} for $0<\theta<\pi$}. \qedhere
\end{align*}
\end{proof}

For $\lambda=\mu=0$, \eqref{eqn:new-mono-sum} reduces to the
special case proved in \cite{saiful-swami-2011-CAMWA}.
Further, Theorem \ref{thm:monotonicity-alpha-beta-cosine-sum} can be regarded
as the generalization of the particular case of
Theorem \ref{thm:vietoris-1958}, with $a_k =1/k$.

\section{Applications to location of zeros, orthogonal polynomials and generalized polylogarithms}
\label{sec:positivity-4}
\subsection{Applications in estimating the location of zeros of certain trigonometric polynomials}

Suppose that the sequence $\{e_k\}$ satisfy the conditions of Corollary \ref{cor:sigma-bc-cosine},
that is
\begin{align}\label{eqn:1}
\frac{e_k}{e_{k-1}}\leq \left(\frac{c+n-k}{b+n-k}\right)\frac{(k-\alpha)}{k}, \qquad  1\leq k\leq n.
\end{align}
Then it follows from \eqref{eqn:} that
\begin{align}\label{eqn:2}
 \sum_{k=0}^n e_k \cos\left(k+\frac{1}{4}\right)\theta>0, \quad \hbox{ $0<\theta<2\pi$}.
\end{align}
With the transformation $\theta$ by $2 \pi-\theta$, this is equivalent to
\begin{align}\label{eqn:3}
\sum_{k=0}^n e_k \sin\left(k+\frac{1}{4}\right)\theta>0, \quad \hbox{ $0<\theta<2\pi$}.
\end{align}
Now using the identities,
\begin{align*}
\cos{(k+\lambda)\theta}=\cos\left(k+\frac{1}{4}\right)\theta \cos\left(\lambda-\frac{1}{4}\right)\theta
-\sin\left(k+\frac{1}{4}\right)\theta \sin\left(\lambda-\frac{1}{4}\right)\theta,\\
\sin{(k+\mu)\theta}=\sin\left(k+\frac{1}{4}\right)\theta \cos\left(\mu-\frac{1}{4}\right)\theta
+\cos\left(k+\frac{1}{4}\right)\theta \sin\left(\mu-\frac{1}{4}\right)\theta,
\end{align*}
we can easily get that, \eqref{eqn:2} and \eqref{eqn:3} imply,
\begin{align}
\sum_{k=0}^n e_k \cos(k+\lambda)\theta>0,
    \quad \hbox{for $0\leq \lambda \leq \frac{1}{4}$  and $0<\theta<2\pi$},\label{eqn:4}\\
\sum_{k=0}^n e_k \sin(k+\mu)\theta>0,
    \quad \hbox{for $\frac{1}{4}\leq \mu \leq \frac{1}{2}$  and $0<\theta<2\pi$}.\label{eqn:5}
\end{align}
Now by considering \eqref{eqn:2}-\eqref{eqn:5}, we can find the location of the zeros of the polynomials
\begin{align*}
p(\theta)=\displaystyle\sum_{k=0}^n a_k \cos{(n-k)\theta} \quad \hbox{and}\quad
q(\theta)=\displaystyle\sum_{k=0}^{n-1} a_k \sin{(n-k)\theta},
\end{align*}
where $a_0>a_1\geq a_2\geq\cdots\geq a_n>0$
by assuming the condition \eqref{eqn:1}. This condition is weaker than the conditions
exist in the literature in \cite{brown-hewitt-1984-pos-trig-sum},
which are otherwise weaker than the sharp estimates obtained in \cite{askey-steinig-1974-trig-sum-ams}.
Since the procedure is same as given in \cite{askey-steinig-1974-trig-sum-ams}, we omit the details.

\subsection{Positivity of Some Orthogonal Polynomial Sums}
We begin with the following observation.
\begin{remark}
If we substitute $x=\arccos t$, $t\in[-1,1]$ in Theorem \ref{thm:new-positive-cosine-sum},
then $\hat{C}_n(x)\geq0$ reduces to
\begin{align*}
T_0(t)+T_1(t)+\sum_{k=2}^n \frac{1}{(k+\alpha)^{\lambda}(k+\beta)^{\mu}}T_k(t)\geq0
\end{align*}
where $T_k(t)$ is the well known Chebyshev polynomial of degree $k$.
\end{remark}

This remark provides insight to search for applications of positivity of trigonometric sums to
the sequence of orthogonal polynomials. At first, we provide a result for Gegenbauer polynomials,
for the parameters considered in this paper, in similar lines to the results exist in the literature for
the sake of completeness. Motivated by the results on Gegenbauer polynomials and Chebyshev polynomials
we provide positivity results related to orthogonal polynomials on the unit circle which proposes a
new direction for future research.

\subsubsection{Gegenbauer Polynomial Sums}
Let $C_k^{\lambda}(x)$ be the Gegenbauer polynomial of degree $n$ and order $\lambda>0$ is
given by the generating function
\begin{align*}
(1-2xr+r^2)^{-\lambda}=\sum_{n=0}^{\infty}C_n^{\lambda}(x)r^n, \quad |x|<1.
\end{align*}
This power series and Gegenbauer polynomials occur so frequently and are of much interest.
By using the location of zeros of Gegenbauer polynomials, in \cite{lewis-1979-siam}
the zero-free property of Jacobi polynomial sums is obtained
which is further generalization of a result given in
\cite{ruscheweyh-1978-kakeya-thm-SIAM} on Gegenbauer polynomial sums.
Fejer \cite{fejer-1931-ultasperical-pol} proved that the power series coefficient
of $(1-r)^{-1}(1-2xr+r^2)^{-\lambda}$ are positive
in $0<\lambda\leq1/2$ i.e.

\begin{align*}
\sum_{k=0}^n C_k^{\lambda}(x)>0, \quad -1<x<1, 0<\lambda\leq 1/2.
\end{align*}
This result was extended in \cite[Theorem 4]{askey-steinig-1974-trig-sum-ams}.
For the normalized Gegenbauer polynomials $\dfrac{C_k^{\lambda}(x)}{C_k^{\lambda}(1)}$,
positivity result was established in \cite{brown-koumanods-wang-1998-ultraspherical-poly} as follows.
For $-1<x<1$ and for all positive integer $n$,
\begin{align}\label{eqn:koum:normalized-gegenbauer}
\sum_{k=0}^n \frac{C_k^{\lambda}(x)}{C_k^{\lambda}(1)}>0 \quad \hbox{for $\lambda\geq \lambda'=\alpha+1/2$},
\end{align}
and $\alpha'$ is the solution of $\int_0^{j_{\alpha,2}}t^{-\alpha}J_{\alpha}(t)dt=0$, where
$J_{\alpha}$ is the Bessel function of first kind of order $\alpha$ with $j_{\alpha,2}$ is the
second positive root. Numerically $\lambda'=0.23061297\ldots.$ When $\lambda<\lambda'$ the sums are
unbounded below in $(-1,1)$. One such extension is given below.

\begin{corollary}\label{cor:normalized-gegenbauer-positivity}
Let $\{a_k\}$ be as in Corollary \ref{cor:sigma-bc-cosine}, then for all positive $n$
and $-1<x<1$ we have
\begin{align*}
\sum_{k=0}^n a_k \frac{C_k^{\lambda}(x)}{C_k^{\lambda}(1)}>0\quad \hbox{ for all
$\lambda>0$.}
\end{align*}
\end{corollary}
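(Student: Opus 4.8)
The plan is to realize the sum $\sum_{k=0}^n a_k \dfrac{C_k^{\lambda}(x)}{C_k^{\lambda}(1)}$ as a positive linear combination of the cosine sums in \eqref{eqn:2}, exactly as in the classical Gegenbauer/Fej\'er argument. First I would recall the well-known integral (Gegenbauer's) representation expressing $\dfrac{C_k^{\lambda}(\cos\varphi)}{C_k^{\lambda}(1)}$, for $\lambda>0$, as an average of $\cos(k+\lambda)\psi$ (or of $\cos k\psi$ with a nonnegative kernel) over an interval, with a kernel that is nonnegative and independent of $k$; concretely, for $\lambda>0$ there is a probability measure $d\mu_{\lambda,\varphi}(\psi)$ supported in an interval $J\subset(0,2\pi)$ such that
\begin{align*}
\frac{C_k^{\lambda}(\cos\varphi)}{C_k^{\lambda}(1)}=\int_J \cos(k+\lambda)\psi\, d\mu_{\lambda,\varphi}(\psi),\qquad -1<\cos\varphi<1,\ \lambda>0.
\end{align*}
(For $0<\lambda\le 1/2$ this is the standard Feldheim/Vilenkin-type formula with an explicitly nonnegative kernel; for general $\lambda>0$ one iterates or uses the product formula.) Substituting this into the sum and interchanging summation and integration gives
\begin{align*}
\sum_{k=0}^n a_k \frac{C_k^{\lambda}(\cos\varphi)}{C_k^{\lambda}(1)}=\int_J\left(\sum_{k=0}^n a_k\cos(k+\lambda)\psi\right)d\mu_{\lambda,\varphi}(\psi).
\end{align*}

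Second, I would fix the value of $\lambda$ appearing as the shift and apply the inequality \eqref{eqn:4} from the preceding subsection: for a sequence $\{a_k\}=\{e_k\}$ satisfying the hypotheses of Corollary \ref{cor:sigma-bc-cosine}, namely \eqref{eqn:1}, one has $\sum_{k=0}^n e_k\cos(k+\lambda)\psi>0$ for $0\le\lambda\le\frac14$ and $0<\psi<2\pi$. Thus, provided the support $J$ lies in $(0,2\pi)$ and the shift is in $[0,1/4]$, the integrand is strictly positive on $J$, the measure $\mu_{\lambda,\varphi}$ is a nonzero nonnegative measure, and positivity of the Gegenbauer sum follows immediately. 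The only issue is that the natural shift in the Gegenbauer integral formula is $\lambda$ itself, which need not lie in $[0,1/4]$; to handle all $\lambda>0$ I would instead use the version of the integral representation with kernel giving $\cos k\psi$ (shift $0\in[0,1/4]$), i.e.\ write $\dfrac{C_k^{\lambda}(\cos\varphi)}{C_k^{\lambda}(1)}$ as $\int \cos k\psi\, d\nu_{\lambda,\varphi}(\psi)$ with $\nu_{\lambda,\varphi}\ge 0$ supported in $(0,2\pi)$ — this is available for every $\lambda>0$ by the reproduction/product formula for ultraspherical polynomials — and then invoke \eqref{eqn:4} with $\lambda=0$, or equivalently \eqref{eqn:2} directly. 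Positivity of the sum is then immediate for all $\lambda>0$, $-1<\cos\varphi<1$.

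Third, I would check the degenerate endpoints: $x\to\pm1$ is excluded, and for $\cos\varphi$ interior the measure genuinely charges a set of positive measure inside $(0,2\pi)$, so the strict inequality from \eqref{eqn:2} (or \eqref{eqn:4}) is preserved after integration. I expect the main obstacle to be purely bookkeeping: namely pinning down the correct nonnegative integral representation of $C_k^{\lambda}/C_k^{\lambda}(1)$ that is valid for \emph{all} $\lambda>0$ (not just $\lambda\le 1/2$) and whose support sits inside $(0,2\pi)$ so that \eqref{eqn:2} applies verbatim. Once that representation is cited, the rest is a one-line interchange of sum and integral together with Corollary \ref{cor:sigma-bc-cosine} via \eqref{eqn:2}, and since this is exactly the route taken in \cite{brown-koumanods-wang-1998-ultraspherical-poly} for the case $a_k\equiv 1$, I would simply refer to that argument and note that our weaker hypothesis \eqref{eqn:1} on $\{a_k\}$ is precisely what \eqref{eqn:2} needs.
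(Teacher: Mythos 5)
Your first step --- the Dirichlet--Mehler representation of $C_k^{\lambda}(\cos\varphi)/C_k^{\lambda}(1)$ as a positive average of $\cos(k+\lambda)\psi$ over $\psi\in(0,\varphi)$, followed by \eqref{eqn:4} --- is sound, but it only applies when the shift $\lambda$ lies in $[0,1/4]$ (and the positive-kernel form of that formula is itself restricted to $0<\lambda<1$). So this part of your argument proves the corollary only for $0<\lambda\le 1/4$. For the remaining range you lean entirely on a representation $C_k^{\lambda}(\cos\varphi)/C_k^{\lambda}(1)=\int\cos(k\psi)\,d\nu_{\lambda,\varphi}(\psi)$ with $\nu_{\lambda,\varphi}\ge 0$, valid for \emph{all} $\lambda>0$, which you do not establish and which you yourself flag as ``the main obstacle.'' That is a genuine gap, not bookkeeping: the product/linearization formulas you invoke give $C_n^{\lambda}(\cos\theta)/C_n^{\lambda}(1)$ as a nonnegative combination of $\cos((n-2k)\theta)$ whose coefficients depend on $n$, which is not a fixed measure against $\cos(k\psi)$, so the interchange of sum and integral you need is not available from those facts.

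The missing idea --- and the route the paper takes --- is much simpler for large $\lambda$: since the hypotheses of Corollary \ref{cor:sigma-bc-cosine} force $a_0\ge a_1\ge\cdots\ge a_n>0$, Abel summation (Lemma \ref{lemma:abel-sum}) reduces $\sum_{k=0}^n a_k\,C_k^{\lambda}(x)/C_k^{\lambda}(1)$ to a nonnegative combination of the unweighted partial sums $\sum_{j=0}^{m}C_j^{\lambda}(x)/C_j^{\lambda}(1)$, which are strictly positive for all $\lambda\ge\lambda'=0.23061297\ldots$ by \eqref{eqn:koum:normalized-gegenbauer}. Because $\lambda'<1/4$, this range overlaps with the range $0<\lambda\le 1/4$ covered by your integral-representation argument, and together they give all $\lambda>0$ with no need for the unproven representation. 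I would recommend restructuring your proof as this two-case argument (Abel summation for $\lambda\ge\lambda'$; Dirichlet--Mehler plus \eqref{eqn:4} for $0<\lambda\le 1/4$), which is precisely the procedure of Koumandos that the paper cites.
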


\begin{proof}
Consider $\lambda\geq \lambda'$ then using summation by parts and
\eqref{eqn:koum:normalized-gegenbauer}
yields the required result. We omit the details of the proof.
Therefore, Corollary \ref{cor:normalized-gegenbauer-positivity} extends
Theorem 5 of \cite{askey-steinig-1974-trig-sum-ams}. \qedhere
\end{proof}

\subsubsection{Orthogonal Polynomials on the unit circle(OPUC)}
In \cite{kiran-ccs-SIGMA}, an illustration
of Orthogonal polynomials on the unit circle are provided in terms of the Gaussian hypergeometric
function ${}_2F_1(a,b;c;z)$. Taking lead from the results given in \cite{kiran-ccs-SIGMA} we provide
the following result.
\begin{theorem}
For $b>-1/2$, we have
\begin{align*}
\sum_{k=0}^n F_k^b(\omega)>0, |\omega|<1
\end{align*}
where $F_k^b(\omega)$ are the para orthogonal polynomial on the unit circle defined by the
generating function
\begin{align*}
{}_2F_1(b+1,-;-;\omega z)\cdot {}_2F_1(\bar{b}+1,-;-;z)
=\sum_{n=0}^{\infty} F_k^b(\omega)z^n , \quad z\in\mathbb{D},
\end{align*}
where $F_k^b(\omega):=\dfrac{(b+\bar{b}+1)}{k!}{}_2F_1(-k,b+1;b+\bar{b}+2;1-z)$.
\end{theorem}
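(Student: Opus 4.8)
The plan is to reduce the claimed inequality $\sum_{k=0}^n F_k^b(\omega)>0$ for $|\omega|<1$ to a positivity statement about cosine sums of the form already established, namely Corollary \ref{cor:sigma-bc-cosine} and the remarks of Section \ref{sec:positivity-4}. First I would unwind the definition: since $F_k^b(\omega)=\dfrac{(b+\bar b+1)}{k!}\,{}_2F_1(-k,b+1;b+\bar b+2;1-z)$ and $b+\bar b+1=2\operatorname{Re}b+1>0$ by hypothesis, the terminating hypergeometric function $F_k^b$ is a polynomial of degree $k$; I would identify it (up to normalization) with a Jacobi polynomial $P_k^{(\alpha,\beta)}$ evaluated at an appropriate argument, using the standard representation ${}_2F_1(-k,k+\alpha+\beta+1;\alpha+1;(1-x)/2)=\binom{k+\alpha}{k}^{-1}P_k^{(\alpha,\beta)}(x)$. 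Matching parameters forces $\alpha+1=b+\bar b+2$ and the second numerator parameter to be $b+1$, which pins down $\alpha=b+\bar b$ and identifies the argument of the Jacobi polynomial; this is where the condition $b>-1/2$ (equivalently $2\operatorname{Re}b+1>0$) enters as the requirement that the parameters lie in the admissible range.

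Next I would pass to the trigonometric picture. Writing $\omega=e^{i\theta}$ on the boundary (and then using the maximum/minimum principle, or a direct $|\omega|<1$ argument, to extend from $|\omega|=1$ into the disc), the para-orthogonal polynomial $F_k^b(e^{i\theta})$ becomes, after the substitution $t=\cos(\theta/2)$ or $x=\arccos t$, a Chebyshev-type expression exactly of the shape appearing in the Remark following Theorem \ref{thm:new-positive-cosine-sum} and in equations \eqref{eqn:2}--\eqref{eqn:5}. The partial sum $\sum_{k=0}^n F_k^b(\omega)$ then takes the form $\sum_{k=0}^n e_k\cos\!\big(k+\tfrac14\big)\theta$ (or the sine analogue) for a suitable positive, decreasing sequence $\{e_k\}$ built from ratios of Pochhammer symbols $(b+1)_k$, $(b+\bar b+2)_k$, etc. I would verify that this $\{e_k\}$ satisfies the monotonicity hypothesis $(b'+n-k)k\,e_k\le (c'+n-k)(k-\alpha)e_{k-1}$ of Corollary \ref{cor:sigma-bc-cosine} for $\alpha\ge\alpha_0$ — this reduces to an elementary inequality between consecutive Pochhammer ratios — and then invoke \eqref{eqn:} (equivalently \eqref{eqn:2}--\eqref{eqn:3}) to conclude positivity.

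The main obstacle I anticipate is the bookkeeping in the second step: correctly carrying the factor $2\sin(\theta/2)$ (or $2\cos(\theta/2)$) that arises when one turns a polynomial in $\cos k\theta$ into one in $\cos(k+\tfrac14)\theta$, and checking that the resulting coefficient sequence $\{e_k\}$ is genuinely positive and decreasing for all $b$ with $2\operatorname{Re}b+1>0$ — not merely for real $b$. A secondary subtlety is handling the complex parameter $b$: the generating function pairs ${}_2F_1(b+1,-;-;\omega z)$ with ${}_2F_1(\bar b+1,-;-;z)$, so the coefficients $F_k^b(\omega)$ are real when $\omega$ is real, but the passage to the disc $|\omega|<1$ should be justified by noting that $\sum_{k=0}^n F_k^b(\omega)$ is (a constant times) a product/convolution of two analytic functions whose boundary positivity, together with an argument-principle or Schur-type estimate, yields nonvanishing inside. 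Once these points are settled the positivity of the partial sums follows directly from the Vietoris-type machinery of Sections \ref{sec:positivity-2} and \ref{sec:positivity-3}, so I would only sketch the routine Jacobi-polynomial identities and the elementary Pochhammer estimates and refer to \cite{kiran-ccs-SIGMA,askey-steinig-1974-trig-sum-ams,koumandos-2007-ext-viet-ramanujan} for the standard parts.
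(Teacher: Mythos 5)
Your route is genuinely different from the paper's, and as written it does not close. The paper's proof is a short absolute-monotonicity (Fej\'er--Kaluza type) argument: it factors $1-(\omega+1)z+\omega z^2=(1-z)(1-\omega z)$, so that
\begin{align*}
\sum_{n=0}^{\infty}\Big(\sum_{k=0}^n F_k^b(\omega)\Big)z^n=(1-z)^{-(b+2)}(1-\omega z)^{-(b+1)}=:\psi(z),
\end{align*}
then observes that $g(z)=\log\psi(z)$ satisfies $g'(z)=\sum_{n\geq0}\big((b+2)+(b+1)\omega^{n+1}\big)z^n$, whose coefficients are nonnegative for $b>-1/2$ and $|\omega|<1$; hence $g$, and therefore $\psi=e^{g}$, is absolutely monotonic, and the partial sums, being exactly the Taylor coefficients of $\psi$, are positive. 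This factorization-plus-logarithm device is the key idea your proposal is missing, and it yields the full range $b>-1/2$ in two lines.

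The concrete gaps in your plan are the following. First, the reduction to Corollary \ref{cor:sigma-bc-cosine} cannot deliver the stated range: that corollary and the rest of the Vietoris-type machinery of Section \ref{sec:positivity-3} carry the constraint $\alpha\geq\alpha_0\approx0.308$ together with a specific coefficient monotonicity $(b+n-k)k\,e_k\leq(c+n-k)(k-\alpha)e_{k-1}$, and you never verify that the Pochhammer-ratio sequence arising from ${}_2F_1(-k,b+1;b+\bar b+2;1-z)$ satisfies it; after your parameter matching, $\alpha\geq\alpha_0$ translates into a restriction on $b$ strictly stronger than $b>-1/2$, so even if every identification went through you would prove a weaker theorem. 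Second, the step ``establish positivity at $\omega=e^{i\theta}$ and extend into the disc by a maximum/minimum principle'' is not available in the form you invoke it: $\sum_{k=0}^nF_k^b(\omega)$ is a polynomial in $\omega$, hence complex off the real axis, so the asserted inequality must be read as a statement about nonnegative power-series coefficients (real $\omega$, real $b$), which is precisely what absolute monotonicity addresses directly; a harmonic minimum principle for the real part would additionally require strict boundary positivity for every $\theta$, including the degenerate point $\omega=1$, which you do not establish. The Jacobi-polynomial identification and the passage to sums of the form $\sum e_k\cos(k+\tfrac14)\theta$ are likewise asserted rather than carried out, and they are not routine for para-orthogonal polynomials on the unit circle.
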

\begin{proof}
To prove this result we use the concept of absolutely monotonicity of functions, where
a function is absolutely monotonic if its power series has non-negative coefficients.

Now, for real $b$, we consider the function $(1-z)^{-1}(1-(\omega+1)z+\omega z^2)^{-(b+1)}$
so that
\begin{align*}
\frac{1}{(1-z)}\frac{1}{(1-(\omega+1)z+\omega z^2)^{(b+1)}}
=\sum_{n=0}^{\infty}\left(\sum_{k=0}^n F_k^b(\omega)\right)z^n.
\end{align*}
Hence to prove the result, it is enough to prove that
the function $z\rightarrow (1-z)^{-1}(1-\omega z)^{-(b+1)}(1-z)^{-(\bar{b}+1)}$
is absolutely monotonic.

Let $\psi(z)=\dfrac{1}{(1-z)}\dfrac{1}{(1-(\omega+1)z+\omega z^2)^{b+1}}$, then
\begin{align*}
g(z)&=\log \psi(z)=-(b+2)\log(1-z)-(b+1)\log(1-\omega z)\\
g'(z)&=\dfrac{b+2}{1-z}+\frac{(b+1)\omega}{1-\omega z}=\sum_{n=0}^{\infty}(1+(b+1)\omega^{n+1})z^n
\end{align*}
i.e. $z\rightarrow g'(z)$ is absolutely monotonic function. Now $g(0)=0$, so
$g(z)$ is also absolutely monotonic function. Hence, $\psi(z)=e^{g(z)}$ is also
absolutely monotonic function leading to $\displaystyle
\sum_{k=0}^n F_{k}^b(\omega)>0 $ and the proof is complete.
\end{proof}
The following corollary can be obtained using summation by parts.
\begin{corollary}\label{cor:OPUC-positivity}
Let $\{a_k\}$ be as in Corollary \ref{cor:sigma-bc-cosine}, then for all positive $n$
and $|\omega|<1$ we have
\begin{align*}
\sum_{k=0}^n a_k F_k^b(\omega)>0\quad \hbox{ for all $b>-1/2$.}
\end{align*}
\end{corollary}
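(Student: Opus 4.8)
The plan is to reduce Corollary \ref{cor:OPUC-positivity} to the preceding theorem on $\sum_{k=0}^n F_k^b(\omega)>0$ via an Abel summation argument, exactly in the spirit of the earlier corollaries (Corollary \ref{cor:new-positive-sum}, Corollary \ref{cor:normalized-gegenbauer-positivity}). First I would denote the partial sums $\Sigma_m(\omega):=\sum_{k=0}^m F_k^b(\omega)$ and recall from the theorem just proved that $\Sigma_m(\omega)>0$ for all $m\geq 0$, all $b>-1/2$, and all $|\omega|<1$. Then, applying Lemma \ref{lemma:abel-sum} with $b_k=a_k$ and $c_k=F_k^b(\omega)$, one writes
\begin{align*}
\sum_{k=0}^n a_k F_k^b(\omega)=\sum_{k=0}^{n-1}(a_k-a_{k+1})\,\Sigma_k(\omega)+a_n\,\Sigma_n(\omega).
\end{align*}
The second term is positive because $a_n>0$ and $\Sigma_n(\omega)>0$. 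For the first term I would observe that the hypotheses of Corollary \ref{cor:sigma-bc-cosine} on $\{a_k\}$ force, in particular, $a_0\geq a_1\geq\cdots\geq a_n>0$, so each difference $a_k-a_{k+1}\geq 0$; since each $\Sigma_k(\omega)>0$, every summand is nonnegative, and the $k=0$ summand $(a_0-a_1)\Sigma_0(\omega)$ is actually strictly positive once $a_0>a_1$ (or else the $a_n\Sigma_n$ term already suffices). Hence the whole sum is positive.

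The key steps, in order: (1) invoke the theorem to get positivity of all the partial sums $\Sigma_m(\omega)$; (2) extract from the chain of inequalities in Corollary \ref{cor:sigma-bc-cosine} that $\{a_k\}$ is nonincreasing and positive (the extra multiplicative condition $(b+n-k)ka_k\leq(c+n-k)(k-\alpha)a_{k-1}$ is only needed for the cosine-sum statement and is not required here, so I would simply note that it is implied or irrelevant); (3) apply Abel summation; (4) conclude nonnegativity termwise and strict positivity from the boundary term. I would keep this short, matching the "we omit the details" tone of the analogous corollaries. The main thing to be careful about is step (2): I must make sure the stated hypotheses genuinely yield monotonicity of $\{a_k\}$ — they do, since the displayed chain in Corollary \ref{cor:sigma-bc-cosine} literally begins $e_0\geq e_1\geq\cdots\geq e_n>0$ — so no real obstacle arises; the only subtlety is cosmetic, namely phrasing it so that the reader sees the summation-by-parts reduction is self-contained. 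A proof sketch along these lines:

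\begin{proof}
By the preceding theorem, $\displaystyle\Sigma_m(\omega):=\sum_{k=0}^m F_k^b(\omega)>0$ for every $m\in\mathbb{N}\cup\{0\}$, every $b>-1/2$ and every $\omega$ with $|\omega|<1$. Since $\{a_k\}$ satisfies the hypotheses of Corollary \ref{cor:sigma-bc-cosine}, in particular $a_0\geq a_1\geq\cdots\geq a_n>0$. Applying Lemma \ref{lemma:abel-sum} with the sequences $\{a_k\}$ and $\{F_k^b(\omega)\}$ gives
\begin{align*}
\sum_{k=0}^n a_k F_k^b(\omega)=\sum_{k=0}^{n-1}(a_k-a_{k+1})\,\Sigma_k(\omega)+a_n\,\Sigma_n(\omega).
\end{align*}
Each difference $a_k-a_{k+1}$ is nonnegative and each $\Sigma_k(\omega)$ is positive, so every term of the sum on the right is nonnegative; moreover $a_n\Sigma_n(\omega)>0$. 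Hence $\displaystyle\sum_{k=0}^n a_k F_k^b(\omega)>0$ for all positive integers $n$, all $|\omega|<1$ and all $b>-1/2$.
\end{proof}
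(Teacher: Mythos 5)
Your proposal is correct and is exactly the argument the paper intends: the paper's entire justification for Corollary \ref{cor:OPUC-positivity} is the single remark that it ``can be obtained using summation by parts,'' and your Abel-summation decomposition, combined with the positivity of the partial sums $\Sigma_m(\omega)$ from the preceding theorem and the monotonicity $a_0\geq a_1\geq\cdots\geq a_n>0$ assumed in Corollary \ref{cor:sigma-bc-cosine}, is precisely that omitted computation. No gap; your write-up simply makes explicit what the paper leaves to the reader.
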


\subsection{An open problem on generalized polylogarithm}
The generalized polylogarithm \cite{saiful-swami-polylog} are be defined by the normalized function
\begin{align}\label{defn-polylog}
\Phi_{p,q}(a,b;z)= 1+ \sum_{k=2}^\infty \frac{(1+a)^p(1+b)^q}{(k+a)^p(k+b)^q} z^k
\end{align}
where $k+a\neq 0$, $k+b\not= 0$ and  $p, q \in \mathbb{C},$  $ {\rm Re\;}p>0$,
$ {\rm Re\;}q>0$.
These generalized polylogarithm are generalization of logarithm function and Riemann zeta functions
and can be expressed in terms of generalized hypergeometric functions \cite{saiful-swami-polylog}.

Note that the coefficients of generalized polylogarithm are constant multiples of $\{q_k\}$
given in \eqref{eqn:q_k-coefficients}. Hence we are interested in finding the positivity of
this generalized polylogarithm. This, on the other hand, gives the property that $\Phi_{p,q}(a,b;z)$
is in the class of Caratheod\`ary functions. The geometric properties for these generalized polylogarithm are
not completely studied. However closely related to the generalized polylogarithm is the Komatu
integral operator \cite{komatu-operator-1987-Kodai} which
is one of the well known integral operator in geometric function theory.
Positivity of trigonometric polynomials obtained in
Theorem \ref{thm:new-positive-sine-sums} and Theorem \ref{thm:new-positive-cosine-sum}
are useful in discussing the positivity of the generalized Komatu integral operator.
In other words, we use the generalized Komatu operator to study the positivity of partial sum of the
generalized polylogarithm given by \eqref{defn-polylog}.
In the integral representation of $\Gamma(p)$ given by
$ \displaystyle \Gamma(p)=\int_0^{\infty} e^{-t}t^{p-1} dt $ ,
changing $t=(n+a)u$ gives
\begin{align}\label{eqn-lerch-integral}
\dfrac{1}{(n+a)^p} = \dfrac{1}{\Gamma(p)}\int_0^{\infty} e^{-u(n+a)}u^{p-1} du.
\end{align}
Multiplying both sides of $\eqref{eqn-lerch-integral}$  by $z^n$, taking summation from $n=1,2, \ldots$ and then changing the variable $e^{-u}=s$, we get
\begin{align*}
z \Phi(z,r,\alpha):= \sum_{n=1}^{\infty} \dfrac{1}{\Gamma(p)} \int_0^1 \left(\log 1/s \right)^{p-1} s^a \dfrac{z}{1-zs}ds.
\end{align*}
Multiplying the resultant of replacing $a$ by $b$ and $p$ by $q$ in $\eqref{eqn-lerch-integral}$ with $\eqref{eqn-lerch-integral}$
 and using appropriate substitution gives
\begin{align*}
\Phi_{p,q}(a,b;z) = \dfrac{(1+a)^p(a+b)^q}{\Gamma(p)\Gamma(q)} \int_0^1 \int_0^1 \left(\log\, 1/s\right)^{p-1} \left(\log\, 1/t\right)^{q-1}
\dfrac{z s^a t^b}{1-stz} ds dt,
\end{align*}
where $a, \, b, \,  p-1, \, q-1 \in \{ z\in {\mathbb{C}}: {\rm Re \,} z>-1 \}$.
If we choose
\begin{align}\label{eqn: generalized Komatu operator}
\eta(t):=\left\{
           \begin{array}{ll}
             \dfrac{(1+a)^p(1+b)^q}{\Gamma(p)\Gamma(q)}t^b\displaystyle\int_t^1 s^{a-b}
             \left(\log\dfrac{1}{s}\right)^{p-1}
                \left(\log\dfrac{s}{t}\right)^{q-1} \dfrac{ds}{s}  , & \hbox{$a\neq b$;} \\
             \dfrac{(1+a)^{p+q}}{\Gamma(p+q)}t^{a}\left(\log\dfrac{1}{t}\right)^{p+q-1}, & \hbox{$a=b$.}
           \end{array}
         \right.
\end{align}
It can be easily seen that $ \displaystyle \mathcal{L}[\eta]f(z)=\phi_{p,q}(a,b;z)\ast f(z) $
where $\phi_{p,q}(a,b;z)$ is the generalized polylogarithm. Note that, if we take
\begin{align*}
    \lambda(t)=\frac{(1+a)^p}{\Gamma(p)}t^a(\log(1/t))^{p-1},\ \
    a>-1,\ p\geq0,
\end{align*}
the integral transform $V_\lambda$ in this case takes the form
\begin{align*}
    V_\lambda(f)(z)=\frac{(1+a)^p}{\Gamma(p)}\int_0^1(\log\left(\frac{1}{t}\right))^{p-1}t^{a-1}f(tz)dt\
    \ a>-1,\ p\geq0
\end{align*}
which is a particular case of the generalized Komatu operator given in \eqref{eqn: generalized Komatu operator}.
The geometric properties of this particular case is studied in the literature to a great extent.
For example, we refer to \cite{ali-abeer-2012-JMAA} for certain inclusion properties related
to this integral operator.  An open problem related to Pick functions is provided by the second
author in \cite{Berg-open}. Hence it would be interesting to answer the following.
\begin{problem}
Let $f\in\mathcal{A}$, then to find the conditions on the parameters
$p,q,a$ and $b$ so that the generalized komatu operator $\mathcal{L}[\eta]f(z)$ belong to
Caratheodory class.
\end{problem}

\subsection*{Acknowledgement}{This work have been partially supported by
“Council of Scientific and Industrial Research, India”
(grant code: 09/143(0827)/2013-EMR-1).}

\end{document}